\newtheorem{prop}[subsection]{Proposition}
\newtheorem{conj}[subsection]{Conjecture}
\newtheorem{teor}[subsection]{Theorem}
\newtheorem{lema}[subsection]{Lemma}
\newtheorem{cor} [subsection]{Corollary}
\theoremstyle{definition}
\theoremstyle{remark}
\newtheorem{obs} [subsection]{Remark}
\def\qdepth{\operatorname{hdepth}}
\def\hdepth{\operatorname{hdepth}}
\numberwithin{equation}{section}
\begin{document}

\title[Comparing Hilbert depth of $I$ with Hilbert depth of $S/I$. III]{Comparing Hilbert depth of $I$ with Hilbert depth of $S/I$. III}
\author[Andreea I.\ Bordianu, Mircea Cimpoea\c s 
       ]
  {Andreea I.\ Bordianu$^1$ and Mircea Cimpoea\c s$^2$
	}
\date{}

\keywords{Depth, Hilbert depth, monomial ideal, squarefree monomial ideal}

\subjclass[2020]{05A18, 06A07, 13C15, 13P10, 13F20}

\footnotetext[1]{ \emph{Andreea I.\ Bordianu}, University Politehnica of Bucharest, Faculty of
Applied Sciences, 
Bucharest, 060042, E-mail: andreea.bordianu@stud.fsa.upb.ro}
\footnotetext[2]{ \emph{Mircea Cimpoea\c s}, University Politehnica of Bucharest, Faculty of
Applied Sciences, 
Bucharest, 060042, Romania and Simion Stoilow Institute of Mathematics, Research unit 5, P.O.Box 1-764,
Bucharest 014700, Romania, E-mail: mircea.cimpoeas@upb.ro,\;mircea.cimpoeas@imar.ro}

\begin{abstract}
Let $I$ be a squarefree monomial ideal of $S=K[x_1,\ldots,x_n]$. We prove that if $\hdepth(S/I)\leq 8$ or $n\leq 10$ 
then $\hdepth(I)\geq \hdepth(S/I)-1$.
\end{abstract}

\maketitle

\section{Introduction}

Let $K$ be a field and $S=K[x_1,\ldots,x_n]$ the polynomial ring over $K$ in $n$ indeterminates.
Let $M$ be a finitely generated graded $S$-module. The Hilbert depth of $M$, denoted by $\hdepth(M)$, is the 
maximal depth of a finitely generated graded $S$-module $N$ with the same Hilbert series as $M$.
For basic properties of this invariant we refer the reader to \cite{bruns,uli}.

Let $0\subset I\subsetneq J\subset S$ be two squarefree monomial ideals.
In \cite{lucrare2}, it was presented a new method of computing the Hilbert depth of $J/I$, as follows:

For all $0\leq j\leq n$, let $\alpha_j(J/I)$ be the number of squarefree monomials 
$u$ of degree $j$ with $u\in J\setminus I$. For all $0\leq q\leq n$ and $0\leq k\leq q$, we consider the integers
\begin{equation}\label{betak}
  \beta_k^q(J/I):=\sum_{j=0}^k (-1)^{k-j} \binom{q-j}{k-j} \alpha_j(J/I).
\end{equation}
In \cite[Theorem 2.4]{lucrare2} it was proved that the Hilbert depth of $J/I$ is
\begin{equation}\label{hdep}
\qdepth(J/I):=\max\{q\;:\;\beta_k^q(J/I) \geq 0\text{ for all }0\leq k\leq q\leq n\}.
\end{equation}
Using this combinatorial characterization of the Hilbert depth and the Kruskal-Katona Theorem, in \cite{bordi} and \cite{bordi2}
we studied connections between the Hilbert depth of $S/I$ and Hilbert depth of $I$, proving that if $\hdepth(S/I)\leq 6$ or $n\leq 9$ then
$\hdepth(I)\geq \hdepth(S/I)$. Moreover, this result is best possible, in the sense that we provided an example with $\hdepth(S/I)=7$
and $\hdepth(S/I)=6$ in $S=K[x_1,\ldots,x_{10}]$; see \cite[Example 3.18]{bordi}.

The aim of our paper is to continue this study, using similar techniques.
We prove that if $I\subset S$ is a squarefree monomial with $\hdepth(S/I)\leq 8$ or $n\leq 10$ then 
$\hdepth(I)\geq \hdepth(S/I)-1$.
Also, we propose a general conjecture regarding the relation between $q:=\hdepth(S/I)$ and $\hdepth(I)$
with respect to the number of variables $n$ and $q$; see Conjecture \ref{conju}.

\section{Main results}

Let $I\subset S=K[x_1,\ldots,x_n]$ be a squarefree monomial ideal with $\qdepth(S/I)=q\leq n-1$.
For convenience, we denote 
$$\alpha_j=\alpha_j(S/I), 0\leq j\leq n,\text{ and }\beta_k^q=\beta_k^q(S/I),\; 0\leq k\leq q\leq n.$$
We recall the following result:

\begin{teor}(see \cite[Theorem 2.2]{bordi})\label{teo1}
The following are equivalent:
\begin{enumerate}
\item[(1)] $I$ is principal.
\item[(2)] $\qdepth(I)=n$.
\item[(3)] $\qdepth(S/I)=n-1$.
\end{enumerate}
\end{teor}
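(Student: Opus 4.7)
The plan is to use the combinatorial characterization (\ref{hdep}) throughout: compute $\alpha_j$ and $\beta_k^q$ directly for a principal ideal to obtain $(1)\Rightarrow(2)$ and $(1)\Rightarrow(3)$, and for the converses use inclusion-exclusion over the minimal generating set $G(I)=\{u_1,\ldots,u_r\}$ together with the key observation that $\deg\lcm(u_i,u_j)>\max(\deg u_i,\deg u_j)$ whenever $u_i\ne u_j$ are minimal squarefree generators (otherwise one would divide the other).

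\textbf{The easy direction.} Suppose $I=(f)$ with $\deg f=d$. Every squarefree monomial of degree $j$ in $I$ has the form $f\cdot m$ with $m$ squarefree of degree $j-d$ supported outside $\supp(f)$, so $\alpha_j(I)=\binom{n-d}{j-d}$. Substituting into (\ref{betak}) and using the Vandermonde-type identity
\[
\sum_{i=0}^{K}(-1)^{K-i}\binom{N-i}{K-i}\binom{N}{i}=\binom{N}{K}\sum_{i=0}^{K}(-1)^{K-i}\binom{K}{i}=\delta_{K,0},
\]
which follows from $\binom{N}{i}\binom{N-i}{K-i}=\binom{N}{K}\binom{K}{i}$, gives $\beta_k^n(I)=\delta_{k,d}\ge0$, so $\hdepth(I)=n$. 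The analogous computation for $q=n-1$ yields $\beta_k^{n-1}(I)=[k\ge d]$ and hence $\beta_k^{n-1}(S/I)=[k<d]\in\{0,1\}$, so $\hdepth(S/I)\ge n-1$. The matching upper bound $\hdepth(S/I)\le n-1$ for any proper ideal follows from $\beta_{d_1}^n(S/I)=\delta_{d_1,0}-\beta_{d_1}^n(I)<0$, with $d_1:=\min_i\deg u_i\ge1$.

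\textbf{The hard direction.} For a general $I$ with minimal generators $u_1,\ldots,u_r$, inclusion-exclusion on squarefree multiples of the $u_i$ gives $\alpha_j(I)=\sum_{\emptyset\ne T\subseteq[r]}(-1)^{|T|+1}\binom{n-d_T}{j-d_T}$, with $d_T:=\deg\lcm_{i\in T}u_i$, and the same Vandermonde cancellation (applied termwise in $T$) produces the compact closed forms
\[
\beta_k^n(I)=\sum_{T:\,d_T=k}(-1)^{|T|+1},\qquad \beta_k^{n-1}(I)=\sum_{T:\,d_T\le k}(-1)^{|T|+1},
\]
with sums over nonempty $T\subseteq[r]$, both totalling $R(1)=1$ where $R(t):=\sum_T(-1)^{|T|+1}t^{d_T}$. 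Assume $r\ge2$; I claim some $\beta_k^{n-1}(I)\ge2$. Let $d^{(1)}<d^{(2)}<\cdots$ be the distinct generator degrees with multiplicities $s_1,s_2,\ldots$. If $s_1\ge2$, take $k=d^{(1)}$: only the $s_1$ singleton subsets $\{i\}$ with $d_i=d^{(1)}$ contribute (since $|T|\ge2$ forces $d_T>\max_{i\in T}d_i\ge d^{(1)}$), giving $\beta_k^{n-1}(I)=s_1\ge2$. If $s_1=1$, take $k=d^{(2)}$: any $T$ with $|T|\ge2$ and $d_T\le d^{(2)}$ would require two generators of degree $\le d^{(1)}$, which is impossible, so $\beta_k^{n-1}(I)=1+s_2\ge2$. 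This yields $\beta_k^{n-1}(S/I)\le-1$, contradicting $(3)$; and since the partial sum of the $\beta_k^n(I)$ through $k$ exceeds the total $R(1)=1$, some later $\beta_{k'}^n(I)$ must be strictly negative, contradicting $(2)$.

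\textbf{Main obstacle.} The main technical step is the derivation of the closed form for $\beta_k^n(I)$ from the inclusion-exclusion expression for $\alpha_j(I)$ via (\ref{betak}); once this identity and the degree-jumping property $d_{\{i,j\}}>\max(d_i,d_j)$ are in hand, the combinatorial selection of $k$ in the hard direction is essentially immediate.
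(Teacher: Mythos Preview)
The present paper does not give a proof of this theorem: it is quoted verbatim from \cite[Theorem~2.2]{bordi}, so there is no in-paper argument to compare against. Your proof is correct and entirely self-contained. The inclusion--exclusion formula $\alpha_j(I)=\sum_{\emptyset\ne T}(-1)^{|T|+1}\binom{n-d_T}{j-d_T}$ together with the Vandermonde-type cancellation indeed collapses to the clean closed forms $\beta_k^n(I)=\sum_{T:d_T=k}(-1)^{|T|+1}$ and $\beta_k^{n-1}(I)=\sum_{T:d_T\le k}(-1)^{|T|+1}$, and the minimality of the squarefree generators gives exactly the strict inequality $d_T>\max_{i\in T}d_i$ for $|T|\ge 2$ that makes your choice of $k$ work. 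One small remark: for the argument ``$\beta_k^{n-1}(S/I)\le -1$ contradicts (3)'' you implicitly need $k\le n-1$; this is fine because when $r\ge 2$ every minimal generator has degree $\le n-1$ (the only degree-$n$ squarefree monomial is $x_1\cdots x_n$, which would be divisible by any other generator), so both $d^{(1)}$ and $d^{(2)}$ are at most $n-1$. With that observation your proof is complete.
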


Theorem \ref{teo1} shows that, in order to prove that $\qdepth(I)\geq \qdepth(S/I)$, it is safe to assume that
$I$ is not principal. Hence $n\geq q+2$. Another important reduction which can be done is to assume that $I\subset \mathfrak m^2$;
see \cite[Remark 2.4]{bordi}. Hence $\alpha_0=1$ and $\alpha_1=n$.

We recall also the following combinatorial identity:
\begin{equation}\label{combi}
\sum_{j=0}^k (-1)^{k-j}\binom{q-j}{k-j}\binom{n}{j} = \binom{n-q+k-1}{k},\text{ for all }0\leq q\leq k\leq n,
\end{equation}
which follows immediately from Chu-Vandermonde formula. Since 
$$\alpha_j(S/I)+\alpha_j(I)=\binom{n}{j},\text{ for all }0\leq j\leq n,$$ 
from \eqref{combi} and \eqref{betak} it follows that
\begin{equation}\label{betai}
\beta_k^q(I) = \beta_k^q(S/I) - \binom{n-q+k-1}{k},\text{ for all }0\leq q\leq k\leq n.
\end{equation}
Let $N$ and $k$ be two positive integers. Then $N$ can be uniquely written as 
$$N=\binom{n_k}{k}+\binom{n_{k-1}}{n_{k-1}}+\cdots+\binom{n_j}{j},\text{ where }n_k>n_{k-1}>\cdots>n_j\geq j\geq 1.$$
As a direct consequence of the Kruskal-Katona Theorem (\cite[Theorem 2.1]{stanley}), we have the following restrictions on $\alpha_j$'s:

\begin{lema}(see \cite[Lemma 3.2]{bordi})\label{cord}
If $\alpha_k=\binom{n_k}{k} + \binom{n_{k-1}}{k-1}+\cdots+\binom{n_j}{j}$, as above, where $2\leq k\leq n-1$, then
\begin{enumerate}
\item[(1)] $\alpha_{k-1}\geq \binom{n_k}{k-1}+\binom{n_{k-1}}{k-2}+\cdots+\binom{n_j}{j-1}$.
\item[(2)] $\alpha_{k+1}\leq \binom{n_k}{k+1}+\binom{n_{k-1}}{k}+\cdots+\binom{n_j}{j+1}$.
\end{enumerate}
\end{lema}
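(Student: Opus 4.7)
The key observation is combinatorial: since $I$ is squarefree, the set of squarefree monomials $x^F \notin I$ (where $F \subseteq [n]$) is closed under divisibility, so the supports $F$ form a simplicial complex $\Delta$ on $[n]$ (the Stanley--Reisner complex of $I$). Consequently $\alpha_j = \alpha_j(S/I)$ equals the number of $j$-element faces of $\Delta$, and the sequence $(\alpha_0,\alpha_1,\ldots,\alpha_n)$ is a shifted $f$-vector. From this, each $(k-1)$-subset of a $k$-face of $\Delta$ is again a face, and each $k$-subset of a $(k+1)$-face is again a face.

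For part (1), I let $\mathcal{F}_k$ be the family of $k$-element faces of $\Delta$, so $|\mathcal{F}_k|=\alpha_k$. By the simplicial property, the lower shadow $\partial\mathcal{F}_k$ (all $(k-1)$-subsets of members of $\mathcal{F}_k$) is contained in the family of $(k-1)$-faces of $\Delta$, giving $|\partial\mathcal{F}_k|\leq\alpha_{k-1}$. Applying the Kruskal--Katona theorem (\cite[Theorem 2.1]{stanley}) to $\mathcal{F}_k$, whose cardinality has the cascade representation $\alpha_k = \binom{n_k}{k}+\binom{n_{k-1}}{k-1}+\cdots+\binom{n_j}{j}$, yields
$$|\partial\mathcal{F}_k| \;\geq\; \binom{n_k}{k-1}+\binom{n_{k-1}}{k-2}+\cdots+\binom{n_j}{j-1},$$
and (1) follows.

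For part (2) I apply the same idea one degree higher. Set $\mathcal{G}=\mathcal{F}_{k+1}$ of size $\alpha_{k+1}$; then $\partial\mathcal{G}$ is contained in the $k$-faces of $\Delta$, so $|\partial\mathcal{G}|\leq\alpha_k$. Introduce
$$M^{*} \;:=\; \binom{n_k}{k+1}+\binom{n_{k-1}}{k}+\cdots+\binom{n_j}{j+1},$$
where any term with $n_i = i$ is understood to be zero. Because $n_k>n_{k-1}>\cdots>n_j\geq j\geq 1$, the nonzero terms form a valid cascade representation of $M^{*}$, whose Kruskal--Katona shadow bound is precisely $\binom{n_k}{k}+\binom{n_{k-1}}{k-1}+\cdots+\binom{n_j}{j}=\alpha_k$. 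Since the Kruskal--Katona shadow function $N\mapsto\partial(N)$ is strictly increasing in $N$, the inequality $\alpha_{k+1}>M^{*}$ would force $\partial(\alpha_{k+1})>\partial(M^{*})=\alpha_k$, contradicting $|\partial\mathcal{G}|\leq\alpha_k$. Hence $\alpha_{k+1}\leq M^{*}$, proving (2).

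The only delicate point is (2): one must verify that the expression proposed for the upper bound is itself in standard cascade form (this is the role of the constraint $n_j\geq j$) and then invoke monotonicity of the Kruskal--Katona shadow function, both of which are standard features of the cascade formalism but easy to overlook.
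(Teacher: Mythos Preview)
Your argument is correct and is exactly what the paper intends: the lemma is stated there without proof, merely as ``a direct consequence of the Kruskal--Katona Theorem (\cite[Theorem 2.1]{stanley})'', and your proof spells out precisely that deduction via the Stanley--Reisner complex and the shadow operator.

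One small slip in part (2): when some trailing $n_i=i$ (so the corresponding terms $\binom{n_i}{i+1}$ vanish), the Kruskal--Katona shadow of the resulting cascade for $M^{*}$ is strictly \emph{less} than $\alpha_k$, not equal to it. Your contradiction still goes through, because the correct statement is the standard duality
\[
\partial^{(k+1)}(N)\le \alpha_k \iff N\le M^{*},
\]
which follows from strict monotonicity of $\partial^{(k+1)}$ together with the computation $\partial^{(k+1)}(M^{*}+1)>\alpha_k$ (add the term $\binom{t}{t}$ to the cascade of $M^{*}$, where $t$ is the lowest surviving index). This is routine and does not affect the validity of your approach.
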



\begin{lema}\label{b37}
Let $I\subset S$ be a proper squarefree monomial ideal with $q=\qdepth(S/I)\leq 8$. Then: 
$$\beta_3^{q-1}(S/I)\leq \binom{n-q+3}{3}.$$
\end{lema}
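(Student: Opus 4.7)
\noindent\emph{Proof plan.} Following \cite[Remark 2.4]{bordi}, I first reduce to the case $I \subset \mathfrak m^2$, so that $\alpha_0 = 1$ and $\alpha_1 = n$. Expanding \eqref{betak} explicitly gives
\[
\beta_3^{q-1}(S/I) = -\binom{q-1}{3} + n\binom{q-2}{2} - (q-3)\alpha_2 + \alpha_3,
\]
and by \eqref{combi} (applied with $k = 3$ and the superscript $q$ replaced by $q-1$) together with the identity $\alpha_j(S/I) + \alpha_j(I) = \binom{n}{j}$ and \eqref{betai}, the target inequality is equivalent to the bound
\[
\alpha_3(I) - (q-3)\alpha_2(I) \geq \binom{n-q+2}{2}.
\]

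The plan is then to produce this lower bound by combining Kruskal--Katona with the positivity coming from $\hdepth(S/I) = q$. Writing the Macaulay decomposition $\alpha_2 = \binom{a_2}{2} + \binom{a_1}{1}$ (with $a_2 > a_1 \geq 1$, or with $a_1$ absent), Lemma \ref{cord}(2) supplies the upper bound $\alpha_3 \leq \binom{a_2}{3} + \binom{a_1}{2}$, which after passing to complements translates into a lower bound for $\alpha_3(I)$ in terms of the KK-parameters $a_1, a_2$. On the other hand, the positivity $\beta_2^q(S/I) \geq 0$ forces $\alpha_2 \geq (q-1)n - \binom{q}{2}$, equivalently $\alpha_2(I) \leq \binom{n-q+1}{2}$, which bounds $a_1$ and $a_2$ from below. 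Substituting these estimates into the equivalent form of the target reduces the problem to a polynomial inequality in $n$, $q$, $a_1$, $a_2$ to be verified in the admissible range.

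The main obstacle is the piecewise nature of the Kruskal--Katona upper shadow: the bound on $\alpha_3$ depends on whether the term $\binom{a_1}{1}$ is present and on the size of $a_2$ relative to $n$. I therefore expect a case analysis on the form of the KK-decomposition of $\alpha_2$, possibly refined by treating the small values $q \in \{2,3,\ldots,8\}$ individually. The restriction $q \leq 8$ plays a crucial role here, keeping the number of sub-cases small and ensuring that the resulting polynomial inequalities in the KK-parameters hold uniformly for all admissible $n$; at $q = 9$ the extremal configurations coming from lexsegment-like ideals begin to saturate the KK upper shadow, and without a genuinely new input the naive combinatorial comparison no longer closes.
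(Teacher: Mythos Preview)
Your plan is sound: the reduction to $I\subset\mathfrak m^2$, the reformulation as a lower bound on $\alpha_3(I)-(q-3)\alpha_2(I)$, and the combination of the Kruskal--Katona upper shadow for $\alpha_3$ with the constraint $\beta_2^q\geq 0$ are all correct, and the case analysis over $q\leq 8$ will go through. What you are proposing, however, is exactly the content of \cite[Lemma~4.1]{bordi}, which proved $\beta_3^{q'}(S/I)\leq\binom{n-q'+2}{3}$ for $q'\leq 7$ by this very KK-plus-positivity argument. The paper does not redo any of this: its proof is the single observation that the argument in \cite{bordi} used only the hypothesis $\hdepth(S/I)\geq q'$, never the equality, and therefore applies verbatim with $q'=q-1$ (and $q-1\leq 7$ whenever $q\leq 8$). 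So your route re-derives the cited lemma from scratch, gaining self-containment, while the paper collapses the whole thing to a one-line reduction to prior work. One minor point: substituting $q'=q-1$ into the cited bound literally yields $\beta_3^{q-1}\leq\binom{n-q+3}{3}$, which is the inequality actually invoked in Lemma~\ref{q3_10}; the $\binom{n-q+2}{3}$ in the displayed statement appears to be a slip. Your equivalent form uses the stronger input $\beta_2^q\geq 0$ (rather than $\beta_2^{q-1}\geq 0$), so it may well recover the sharper constant, but for the downstream application only $\binom{n-q+3}{3}$ is required.
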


\begin{proof}
Let $q'=q-1$. Note that, in the proof of \cite[Lemma 4.1]{bordi}, we used only the fact that $\hdepth(S/I)\geq q$.
Therefore, as $\hdepth(S/I)\geq q'$, according to \cite[Lemma 4.1]{bordi}, we have $\beta_3^{q'}(S/I)\leq \binom{n-q'+2}{3}$.
Hence, we are done.
\end{proof}

\begin{lema}\label{b46}
Let $I\subset S$ be a proper squarefree monomial ideal with $5\leq q=\qdepth(S/I)\leq 7$. Then: 
$$\beta_4^{q-1}\leq \binom{n-q+4}{4}.$$
\end{lema}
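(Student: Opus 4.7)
The plan is to exploit the non-negativity constraint $\beta_3^q(S/I)\ge 0$, which follows from the hypothesis $\qdepth(S/I)=q$, to produce a usable lower bound on $\alpha_3$ and substitute it into $\beta_4^{q-1}$. After the standard reduction (Theorem \ref{teo1} and the remark following it) to the case $I\subseteq\mathfrak m^2$, so that $\alpha_0=1$ and $\alpha_1=n$, the constraint
\[
\beta_3^q(S/I)=\alpha_3-(q-2)\alpha_2+\binom{q-1}{2}n-\binom{q}{3}\ge 0
\]
rearranges to $\alpha_3\ge(q-2)\alpha_2-\binom{q-1}{2}n+\binom{q}{3}$.

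Substituting this lower bound into
\[
\beta_4^{q-1}(S/I)=\alpha_4-(q-4)\alpha_3+\binom{q-3}{2}\alpha_2-\binom{q-2}{3}n+\binom{q-1}{4}
\]
(permissible since $q-4>0$ for $q\ge 5$) yields an upper estimate of the shape
\[
\beta_4^{q-1}(S/I)\le\alpha_4+c_2(q)\alpha_2+c_1(q)n+c_0(q),
\]
where $c_2(q)=\binom{q-3}{2}-(q-4)(q-2)$ evaluates to $-2$, $-5$, $-9$ for $q=5,6,7$ respectively. Because $c_2(q)\le 0$, the trivial lower bound $\alpha_2\ge 0$ supplies the worst case; combined with $\alpha_4\le\binom{n}{4}$, the lemma reduces to the explicit polynomial inequality
\[
\binom{n}{4}+c_1(q)n+c_0(q)\le\binom{n+q-3}{4}
\]
for each $q\in\{5,6,7\}$.

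\emph{The main obstacle} is the arithmetic verification of this last inequality at small $n$. The right-hand side exceeds $\binom{n}{4}$ by $\Theta(n^3)$ while the residue $c_1(q)n+c_0(q)$ is only linear, so the inequality is loose for large $n$; but it must still be checked by direct computation at $n=q+2$, the smallest admissible value (forced by Theorem \ref{teo1} under the non-principal reduction, since $n\ge q+2$ whenever $I$ is not principal). Should a corner configuration resist this direct substitution, a secondary route is to apply $\binom{q-j}{k-j}=\binom{q-j-1}{k-j}+\binom{q-j-1}{k-j-1}$ termwise to \eqref{betak}, obtaining the Pascal-type recursion $\beta_4^{q-1}(S/I)=\beta_4^q(S/I)+\beta_3^{q-1}(S/I)$, and then combine Lemma \ref{b37} with a Kruskal--Katona bound on $\alpha_4$ drawn from Lemma \ref{cord}(1).
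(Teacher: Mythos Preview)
Your argument does establish the bound as literally printed, $\binom{n+q-3}{4}$, but that expression is almost certainly a misprint for $\binom{n-q+3}{4}$: compare the companion Lemma~\ref{b37}, which bounds $\beta_3^{q-1}$ by $\binom{n-q+2}{3}$, and note that identity~\eqref{combi}, equation~\eqref{betai} and Lemma~\ref{lem2}(2) all involve $n-q$, never $n+q$. Your entire strategy rests on the right-hand side exceeding $\binom{n}{4}$ by a cubic amount; for the intended bound the situation is reversed---$\binom{n-q+3}{4}$ lies \emph{below} $\binom{n}{4}$ by $\Theta(n^3)$---so the crude substitutions $\alpha_4\le\binom{n}{4}$ and $\alpha_2\ge 0$ immediately overshoot. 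Concretely, at $q=7$, $n=9$ your estimate gives $\binom{9}{4}+35\cdot 9-90=351$, while the intended target is $\binom{5}{4}=5$.

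The paper's own proof is by citation: the referenced results (\cite[Theorem~3.15, Lemma~4.2]{bordi} and \cite[Lemma~2.5]{bordi2}) establish $\beta_4^{q'}(S/I)\le\binom{n-q'+3}{4}$ for each small $q'$ by Kruskal--Katona case analyses of the kind carried out later in Lemma~\ref{q4_8}. The point, exactly as in the proof of Lemma~\ref{b37}, is that those arguments use only the hypothesis $\qdepth(S/I)\ge q'$; one then takes $q'=q-1$. Your ``secondary route'' via the Pascal recursion $\beta_4^{q-1}=\beta_4^q+\beta_3^{q-1}$ is closer in spirit, but it still presupposes a Kruskal--Katona bound on $\beta_4^q$, which is precisely what the cited lemmas supply and what your primary argument bypasses.
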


\begin{proof}
The conclusion follows from \cite[Theorem 3.15]{bordi}, \cite[Lemma 4.2]{bordi} and \cite[Lemma 2.5]{bordi2},
using a similar argument as in the proof of the previous lemma.
\end{proof}

\section{Main results}

In the following, we will assume that $I\subset \mathfrak m^2$ is a squarefree monomial 
ideal which is not principal. Let $q=\qdepth(S/I)$. Since $I$ is not principal, it follows that 
$n\geq q+2$. On the other hand, since $I\subset \mathfrak m^2$, we have $\hdepth(I)\geq 2$.

We first prove the following lemma:

\begin{lema}\label{lem2}
Let $I\subset S$ be a proper squarefree monomial ideal with $\qdepth(S/I)=q\geq 3$ and let $0\leq \ell \leq q-2$ be an integer.
The following are equivalent:
\begin{enumerate}
\item[(1)] $\qdepth(I)\geq \qdepth(S/I)-\ell.$
\item[(2)] $\beta_{k-\ell}^{q-\ell}(S/I) \leq \binom{n-q+k-1}{k-\ell},\text{ for all }\ell+3\leq k\leq q$.
\end{enumerate}
\end{lema}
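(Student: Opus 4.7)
The plan is to use the characterization \eqref{hdep} to unfold condition (1) into a family of explicit inequalities on the quantities $\beta_k^{q-\ell}(S/I)$, and then to verify that the inequalities not already appearing in (2) hold automatically.

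Concretely, by \eqref{hdep}, $\qdepth(I)\geq q-\ell$ is equivalent to $\beta_k^{q-\ell}(I)\geq 0$ for every $0\leq k\leq q-\ell$. Converting from $I$ to $S/I$ via \eqref{betai} (with $q$ replaced by $q-\ell$), this becomes
\begin{equation*}
\beta_k^{q-\ell}(S/I)\leq \binom{n-q+\ell+k-1}{k}, \qquad 0\leq k\leq q-\ell.
\end{equation*}
After the change of index $k'=k+\ell$, this is precisely condition (2) together with the three additional inequalities corresponding to $k'\in\{\ell,\ell+1,\ell+2\}$, equivalently $k\in\{0,1,2\}$. Hence the equivalence boils down to showing that these three boundary inequalities hold unconditionally.

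The cases $k=0$ and $k=1$ are immediate: $\beta_0^{q-\ell}(S/I)=\alpha_0=1=\binom{n-q+\ell-1}{0}$, and the standing assumption $I\subset\mathfrak{m}^2$ forces $\alpha_1=n$, giving $\beta_1^{q-\ell}(S/I)=n-(q-\ell)=\binom{n-q+\ell}{1}$. The substantive case is $k=2$. Here I would expand
\begin{equation*}
\beta_2^{q-\ell}(S/I)=\binom{q-\ell}{2}-(q-\ell-1)n+\alpha_2,
\end{equation*}
and invoke \eqref{combi} in the form $\binom{q-\ell}{2}-(q-\ell-1)n+\binom{n}{2}=\binom{n-q+\ell+1}{2}$ to collapse the expression to $\binom{n-q+\ell+1}{2}-\alpha_2(I)$. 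The desired inequality then follows from the obvious fact $\alpha_2(I)\geq 0$.

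The main obstacle is the $k=2$ step: one must spot the correct instance of \eqref{combi} that makes the bound telescope against $\alpha_2(I)$. Once that identity is in place, the remaining work is purely bookkeeping, and the two implications (1)$\Rightarrow$(2) and (2)$\Rightarrow$(1) follow symmetrically from the reindexing.
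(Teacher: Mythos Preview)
Your argument is correct and follows essentially the same approach as the paper: unfold $\qdepth(I)\geq q-\ell$ via \eqref{hdep} and \eqref{betai}, then verify that the inequalities for $k\in\{0,1,2\}$ are automatic under the standing assumption $I\subset\mathfrak m^2$. The only cosmetic difference is that the paper checks these three boundary cases directly on the $I$-side, observing $\beta_0^{q-\ell}(I)=\beta_1^{q-\ell}(I)=0$ and $\beta_2^{q-\ell}(I)=\alpha_2(I)\geq 0$, which spares the appeal to \eqref{combi} that you use to handle $k=2$ on the $S/I$-side.
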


\begin{proof}
Since, from \eqref{betai}, we have 
$$\beta_{k-\ell}^{q-\ell}(S/I) + \beta_{k-\ell}^{q-\ell}(I) = \binom{n-q+k-1}{k-\ell},$$ 
it follows that $\qdepth(I)\geq \qdepth(S/I)-\ell$ if and only if 
\begin{equation}\label{conditz}
\beta_{k-\ell}^{q-\ell}(S/I) \leq \binom{n-q+k-1}{k-\ell},\text{ for all }\ell\leq k\leq q. 
\end{equation}
On the other hand, by our assumptions, we have:
$$\beta_0^{q-\ell}(I)=\beta_1^{q-\ell}(I)=0\text{ and }\beta_2^{q-\ell}(I)=\alpha_2(I)\geq 0,$$
hence \eqref{conditz} is automatically fulfilled for $k\leq \ell+2$.
\end{proof}

\begin{lema}\label{q3_10}
If $q\leq 10$, then $\beta_3^{q-1} \leq \binom{n-q+3}{3}$.
\end{lema}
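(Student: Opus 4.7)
The plan is to split the argument by the range of $q$. For $q \leq 8$, Lemma \ref{b37} already yields the stronger bound $\beta_3^{q-1}(S/I) \leq \binom{n-q+2}{3}$, which is at most $\binom{n-q+3}{3}$, and nothing further is needed. Only the genuinely new cases $q \in \{9,10\}$ require work.

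For these, I would begin by expanding the definition \eqref{betak}, using the standing hypotheses that $I$ is proper and $I \subset \mathfrak m^2$ (which give $\alpha_0 = 1$ and $\alpha_1 = n$), to write
\[
\beta_3^{q-1} = -\binom{q-1}{3} + \binom{q-2}{2}\, n - (q-3)\alpha_2 + \alpha_3.
\]
The task then reduces to upper-bounding the combination $\alpha_3 - (q-3)\alpha_2$. For this, the Kruskal--Katona theorem (Lemma \ref{cord}) is tailor-made: writing the standard binomial representation $\alpha_2 = \binom{n_2}{2} + \binom{n_1}{1}$ (with $n_2 > n_1 \geq 1$, or $n_1 = 0$ if the expansion has only one term), one obtains $\alpha_3 \leq \binom{n_2}{3} + \binom{n_1}{2}$. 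Substituting converts the desired estimate into a polynomial inequality in $n$, $n_2$, $n_1$, and $q$.

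To keep the case analysis finite, I would further invoke the lower bound $\alpha_2 \geq (q-1)n - \binom{q}{2}$ coming from $\beta_2^q(S/I) \geq 0$, which confines $n_2$ to a narrow range close to $n$, and then check the resulting polynomial inequality in each of the few remaining subcases (the two values $q = 9, 10$, together with the admissible shapes of the expansion, in particular whether $n_1 = 0$ or $n_1 \geq 1$). The hard part will be precisely this case analysis: the extremal subcase $n_2 = n$, $n_1 = 0$ (i.e.\ $\alpha_2 = \binom{n}{2}$, so that $I$ has no quadratic squarefree generators) is where the bound is closest to being tight, and it is in these extremal configurations that the relaxed estimate $\binom{n-q+3}{3}$ is genuinely needed in place of the sharper $\binom{n-q+2}{3}$ that Lemma \ref{b37} provides for $q \leq 8$.
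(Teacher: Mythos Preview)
Your overall strategy matches the paper's exactly: reduce to $q\in\{9,10\}$, expand $\beta_3^{q-1}$, write $\alpha_2=\binom{n_2}{2}+\binom{n_1}{1}$, bound $\alpha_3$ via Kruskal--Katona, and use $\beta_2^q\geq 0$ to constrain $\alpha_2$ from below. The paper carries this out with the functions $f(x)=\binom{x}{3}-(q-3)\binom{x}{2}$ and $g(x)=\binom{x}{2}-(q-3)x$, reducing to $f(n_2)+g(n_1)\leq f(n)$, handling large $n$ via $f(n)-f(n-1)-g(n-2)=n-q+1>0$, and the finitely many small $n$ by tabulation.

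There is, however, a genuine gap in your toolkit. The constraints you list (Kruskal--Katona together with $\beta_2^q\geq 0$) do \emph{not} suffice for every subcase. Concretely, take $q=10$, $n=15$. The lower bound $\beta_2^{10}\geq 0$ gives only $\alpha_2\geq 90$, and for $\alpha_2=91=\binom{14}{2}$ Kruskal--Katona yields $\alpha_3\leq\binom{14}{3}=364$, hence $\alpha_3-7\alpha_2\leq -273$; but the target is $f(15)=-280$, so you are short by~$7$. (The case $\alpha_2=90$ is similar.) The paper closes this gap by invoking the additional constraint $\beta_4^{10}\geq 0$: combined with Kruskal--Katona bounds on $\alpha_4$, this forces $\alpha_3\leq 357$ (respectively $\alpha_3\leq 350$), which is exactly what is needed. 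Your plan does not mention any $\beta_k^q$ with $k\geq 3$, so as written it cannot handle this subcase.

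A minor point: you identify $\alpha_2=\binom{n}{2}$ as the hard extremal case. It is indeed where the bound $\binom{n-q+3}{3}$ is attained (so the weaker right-hand side is genuinely necessary), but it is the \emph{easiest} case to prove, since $\alpha_3\leq\binom{n}{3}$ gives the inequality immediately. The delicate subcases are at the opposite extreme, where $\alpha_2$ sits near its lower bound $(q-1)n-\binom{q}{2}$; that is precisely where the extra input from $\beta_4^q\geq 0$ is required.
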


\begin{proof}
According to Lemma \ref{b37}, we can assume that $q\in \{9,10\}$.
Since $$\beta_2^q = \alpha_2 - \binom{q-1}{1}n + \binom{q}{2} \geq 0,$$ it follows that 
\begin{equation}\label{condx}
\alpha_2 \geq (q-1)n - \frac{q(q-1)}{2} = \frac{1}{2}(q-1)(2n-q).
\end{equation}
On the other hand, since 
\begin{align*}
& \beta_3^{q-1} = \alpha_3 - (q-3)\alpha_2 + \binom{q-2}{2}n - \binom{q-1}{3}\text{ and }\\
& \binom{n-q+3}{3} = \binom{n}{3} - (q-3)\binom{n}{2} + \binom{q-2}{2}n - \binom{q-1}{3},
\end{align*}
 in order to
complete the proof, it is enough to show that
\begin{equation}\label{vrere}
\alpha_3 - (q-3)\alpha_2 \leq \binom{n}{3} - (q-3)\binom{n}{2}.
\end{equation}
If $\alpha_2=\binom{n}{2}$ then there is nothing to prove, since $\alpha_3\leq \binom{n}{3}$, so we may
assume that $\alpha_2=\binom{n_2}{2}+\binom{n_1}{1}$ where $n>n_2>n_1\geq 0$. From Lemma \ref{cord} it follows
that $\alpha_3\leq \binom{n_2}{3}+\binom{n_1}{2}$.
We consider the functions $$f(x)=\binom{x}{3}-(q-3)\binom{x}{2}\text{ and }g(x)=\binom{x}{2}-(q-3)\binom{x}{1}.$$
Note that, from the above, we have $$\alpha_3-(q-3)\alpha_2\leq f(n_2)+g(n_1).$$ Hence, in order to prove \eqref{vrere},
by \eqref{condx}, it is enough to show that
\begin{equation}\label{pohta}
f(n_2)+g(n_1) \leq f(n),\text{ if }\alpha_2 \geq \frac{1}{2}(q-1)(2n-q).
\end{equation}
We have two cases to consider:
\begin{enumerate}
\item[(1)] $q=9$. We have the following table of values for $f(x)$ and $g(x)$:

\begin{center}
\begin{table}[tbh]
\begin{tabular}{|c|c|c|c|c|c|c|c|c|c|c|c|}
\hline
$x$   & 0 & 1 & 2  & 3  & 4  & 5  & 6  & 7  & 8   & 9   & 10  \\ \hline
$f(x)$ & 0 & 0 & -6 & -17& -32& -50& -70& -91& -112& -132& -150 \\ \hline
$g(x)$ & 0 & -6& -11& -15& -18& -20& -21& -21& -20 & -18 & -15 \\ \hline
\end{tabular}
\end{table}

\begin{table}[tbh]
\begin{tabular}{|c|c|c|c|c|c|c|c|c|c|c|c|}
\hline
$x$    &  11 & 12  & 13  & 14  & 15  & 16  & 17  & 18  & 19 & 20 \\ \hline
$f(x)$ & -165& -176& -182& -182& -175& -160& -136& -102& -57& 0   \\ \hline
$g(x)$ &  -11& -6  & 0    & 7  & 15  & 24  & 34  & 45  & 57 & 70   \\ \hline
\end{tabular}
\end{table}
\end{center}

Note that $f(x)$ is increasing on $[14,\infty)$ and $g(x)$ is increasing on $[7,\infty)$.
Also, $f(x)\leq 0$ for $x\leq 20$ and $f(x)\geq 0$ for $x\geq 20$. Thus, for $n\geq 21$ 
we have that
$$ f(n)-(f(n_2)+g(n_1))\geq f(n)-(f(n-1)+g(n-2))= n-8 > 0,$$
and thus we are done by \eqref{pohta}. Hence, we may assume $11\leq n\leq 20$.
\begin{itemize}
\item $n=20$. From \eqref{condx} we have $\alpha_2\geq 124=\binom{16}{2}+\binom{4}{1}$. It follows that
              $$f(n_2)+g(n_1)\leq f(19)+g(18)=-12<f(20)=0,$$
							and thus we are done  by \eqref{pohta}.
\item $n=19$. From \eqref{condx} we have $\alpha_2\geq 116=\binom{15}{2}+\binom{11}{1}$.
              It follows that
              $$f(n_2)+g(n_1)\leq f(18)+g(17)=-78<f(19)=-57,$$
							and thus we are done  by \eqref{pohta}.
\item $n=18$. From \eqref{condx} we have $\alpha_2\geq 108=\binom{15}{2}+\binom{3}{1}$.
              It follows that
              $$f(n_2)+g(n_1)\leq f(17)+g(16)=-112<f(18)=-102,$$
							and thus we are done  by \eqref{pohta}.
\item $n=17$. From \eqref{condx} we have $\alpha_2\geq 100=\binom{14}{2}+\binom{9}{1}$.
							It follows that
              $$f(n_2)+g(n_1)\leq f(16)+g(15)=-145<f(17)=-136,$$
							and thus we are done  by \eqref{pohta}.
\item $n=16$. From \eqref{condx} we have $\alpha_2\geq 92=\binom{14}{2}+\binom{1}{1}$.
              It follows that
              $$f(n_2)+g(n_1)\leq f(15)+g(14)=-168<f(16)=-160,$$
							and thus we are done  by \eqref{pohta}.
\item $n=15$. From \eqref{condx} we have $\alpha_2\geq 84=\binom{13}{2}+\binom{6}{1}$.
              It follows that
              $$f(n_2)+g(n_1)\leq f(14)=-182<f(15)=-175,$$
							and thus we are done  by \eqref{pohta}.
\item $n=14$. From \eqref{condx} we have $\alpha_2\geq 76=\binom{12}{2}+\binom{10}{1}$.
              It follows that
              $$f(n_2)+g(n_1)\leq f(12)+g(11)=-187<f(16)=-182,$$
							and thus we are done  by \eqref{pohta}.
\item $n=13$. From \eqref{condx} we have $\alpha_2\geq 68=\binom{12}{2}+\binom{2}{1}$.
              It follows that
              $$f(n_2)+g(n_1)\leq f(12)+g(2)=-187<f(13)=-182,$$
							and thus we are done  by \eqref{pohta}.
\item $n=12$. From \eqref{condx} we have $\alpha_2\geq 60=\binom{11}{2}+\binom{5}{1}$.
              It follows that
              $$f(n_2)+g(n_1)\leq f(11)+g(10)=-180<f(12)=-176,$$
							and thus we are done  by \eqref{pohta}.
\item $n=11$. From \eqref{condx} we have $\alpha_2\geq 52=\binom{10}{2}+\binom{7}{1}$.
              It follows that
              $$f(n_2)+g(n_1)\leq f(10)+g(9)=-168<f(11)=-165,$$
							and thus we are done  by \eqref{pohta}.
\end{itemize}

\pagebreak

\item[(2)] $q=10$. We have the following table of values for $f(x)$ and $g(x)$:

\begin{center}
\begin{table}[tbh]
\begin{tabular}{|c|c|c|c|c|c|c|c|c|c|c|c|c|}
\hline
$x$    & 1 & 2  & 3  & 4  & 5  & 6  & 7   & 8   & 9   & 10  & 11   \\ \hline
$f(x)$ & 0 & -7 & -20& -38& -60& -85& -112& -140& -168& -195& -220 \\ \hline
$g(x)$ & -7& -13& -18& -22& -25& -27& -28 & -28 & -27 & -25 & -22 \\ \hline
\end{tabular}
\end{table}

\begin{table}[tbh]
\begin{tabular}{|c|c|c|c|c|c|c|c|c|c|c|c|c|}
\hline
$x$    &  12 & 13  & 14  & 15  & 16  & 17  & 18  & 19  & 20  & 21  & 22 & 23 \\ \hline
$f(x)$ & -242  & -260& -273& -280& -280& -272& -255& -228& -190& -140& -77& 0 \\ \hline
$g(x)$ & -18  & -13 & -7  & 0   & 8   & 17  & 27  & 38  & 50  & 63  & 77 & 92 \\ \hline
\end{tabular}
\end{table}
\end{center}

Note that $f(x)$ is increasing on $[16,\infty)$ and $g(x)$ is increasing on $[8,\infty)$.
Also, $f(x)\leq 0$ for $x\leq 23$ and $f(x)\geq 0$ for $x\geq 23$. Thus, for $n\geq 24$ 
we have that:
$$ f(n)-(f(n_2)+g(n_2))\geq f(n)-(f(n-1)+g(n-2))= n-9 > 0,$$
and therefore we are done by \eqref{pohta}. Hence, we may assume $12\leq n\leq 23$.
\begin{itemize}
\item $n=23$. From \eqref{condx} we have $\alpha_2\geq 162=\binom{18}{2}+\binom{9}{1}$.
							It follows that
              $$f(n_2)+g(n_1)\leq f(22)+g(21)=-14<f(23)=0,$$
							and thus we are done  by \eqref{pohta}.
\item $n=22$. From \eqref{condx} we have $\alpha_2\geq 153=\binom{18}{2}$.
							It follows that
              $$f(n_2)+g(n_1)\leq f(21)+g(20)=-90<f(22)=-77,$$
							and thus we are done  by \eqref{pohta}.
\item $n=21$. From \eqref{condx} we have $\alpha_2\geq 144=\binom{17}{2}+\binom{8}{1}$.
							It follows that
              $$f(n_2)+g(n_1)\leq f(20)+g(19)=-152<f(21)=-140,$$
							and thus we are done  by \eqref{pohta}.
\item $n=20$. From \eqref{condx} we have $\alpha_2\geq 135=\binom{16}{2}+\binom{15}{1}$.
							It follows that
              $$f(n_2)+g(n_1)\leq f(19)+g(18)=-201<f(20)=-190,$$
							and thus we are done  by \eqref{pohta}.
\item $n=19$. From \eqref{condx} we have $\alpha_2\geq 126=\binom{16}{2}+\binom{6}{1}$.
							It follows that
              $$f(n_2)+g(n_1)\leq f(18)+g(17)=-238<f(19)=-228,$$
							and thus we are done  by \eqref{pohta}.
\item $n=18$. From \eqref{condx} we have $\alpha_2\geq 117=\binom{15}{2}+\binom{12}{1}$.
							It follows that
              $$f(n_2)+g(n_1)\leq f(17)+g(16)=-264<f(18)=-255,$$
							and thus we are done  by \eqref{pohta}.
\item $n=17$. From \eqref{condx} we have $\alpha_2\geq 108=\binom{15}{2}+\binom{3}{1}$.
							It follows that
              $$f(n_2)+g(n_1)\leq f(16)=-280<f(17)=-272,$$
							and thus we are done  by \eqref{pohta}.
\item $n=16$. From \eqref{condx} we have $\alpha_2\geq 99=\binom{14}{2}+\binom{8}{1}$.
							It follows that
              $$f(n_2)+g(n_1)\leq f(15)=f(16)=-280,$$
							and thus we are done  by \eqref{pohta}.
\item $n=15$. From \eqref{condx} we have $\alpha_2\geq 90=\binom{13}{2}+\binom{12}{1}$.             
							If $\alpha_2\geq 92=\binom{14}{2}+\binom{1}{1}$, it follows that
              $$f(n_2)+g(n_1)\leq f(14)+g(1)=-280=f(15),$$
							and thus we are done  by \eqref{pohta}. Hence we may assume that $\alpha_2\in \{90,91\}$.
							
							\vspace{5pt}
							If $\alpha_2=91=\binom{14}{2}$ then $\alpha_3\leq \binom{14}{3}=364$ and $\alpha_4\leq \binom{14}{4}=1001$.
							\vspace{5pt}
							
							If $\alpha_3\leq 357$ then 
							$$\alpha_3-7\alpha_2 \leq 357 - 7\cdot 91 = -280=f(15),$$
							and we are done. So we may assume that $\alpha_3\geq 358$. Since
							$$\beta_4^{10} = \alpha_4 - 7\alpha_3 + \binom{8}{2}\alpha_2 -\binom{9}{3}n+\binom{10}{4} = \alpha_4 - 7\alpha_3 + 1498\geq 0,$$
							it follows that $\alpha_4\geq 7\cdot 358 - 1498 = 1008$, a contradiction.							
							
							\vspace{5pt}
							If $\alpha_2=90$ then $\alpha_3\leq \binom{13}{3}+\binom{12}{2}=352$ and $\alpha_4\leq \binom{13}{4}+\binom{12}{3}=935$.
							\vspace{5pt}
							
							If $\alpha_3\leq 350$ then $$\alpha_3-7\alpha_2 \leq 350 - 7\cdot 90 = -280=f(15),$$
							and we are done again. So we may assume $\alpha_3\geq 351$. Since 
							$$\beta_4^{10} =\alpha_4 - 7\alpha_3 + \binom{8}{2}\alpha_2 -\binom{9}{3}n+\binom{10}{4} = \alpha_4 - 7\alpha_3 + 1470 \geq 0,$$ 						
							we get $\alpha_4\geq 7\cdot 351 - 1470 = 987$, 
							a contradiction.
														
\item $n=14$. From \eqref{condx} we have $\alpha_2\geq 81=\binom{13}{2}+\binom{3}{1}$.
							It follows that
              $$f(n_2)+g(n_1)\leq f(13)+g(3)=-273=f(14),$$
							and thus we are done  by \eqref{pohta}.
\item $n=13$. From \eqref{condx} we have $\alpha_2\geq 72=\binom{12}{2}+\binom{6}{1}$.
							It follows that
              $$f(n_2)+g(n_1)\leq f(12)+g(11)=-264<f(13)=-260,$$
							and thus we are done  by \eqref{pohta}.
\item $n=12$. From \eqref{condx} we have $\alpha_2\geq 63=\binom{11}{2}+\binom{8}{1}$.
							It follows that
              $$f(n_2)+g(n_1)\leq f(11)+g(10)=-245<f(12)=-242,$$
							and thus we are done  by \eqref{pohta}.
\end{itemize}
\end{enumerate}
Hence, the proof is complete.
\end{proof}

\pagebreak

\begin{lema}\label{q4_8}
If $q=8$ then $\beta_4^{7} \leq \binom{n-4}{4}$.
\end{lema}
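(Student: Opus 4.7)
The target $\beta_4^7 \le \binom{n-4}{4}$ is equivalent, via identity~\eqref{combi}, to
\[
\alpha_4 - 4\alpha_3 + 10\alpha_2 \le \binom{n}{4} - 4\binom{n}{3} + 10\binom{n}{2}.
\]
Following the template of Lemma~\ref{q3_10} but promoted one degree higher, I would write $\alpha_3 = \binom{n_3}{3} + \binom{n_2}{2} + \binom{n_1}{1}$ in Kruskal--Katona form (with absent terms treated as $0$) and use Lemma~\ref{cord}(2) to bound $\alpha_4 \le \binom{n_3}{4} + \binom{n_2}{3} + \binom{n_1}{2}$. The extra summand $10\alpha_2$, not present in Lemma~\ref{q3_10}, is the main new difficulty; the trivial $\alpha_2 \le \binom{n}{2}$ turns out to be too generous (it already fails to give the target at, say, $n=11$, $\alpha_3 = 120$). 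I would instead use the tighter $6\alpha_2 \le \alpha_3 + 21n - 56$ coming from $\beta_3^8 \ge 0$, combined with $\alpha_2 \ge 7n - 28$ from $\beta_2^8 \ge 0$, to pin $\alpha_2$ down to a narrow integer window.

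Substituting these bounds into the reduced target produces a discrete inequality depending only on the tuple $(n, n_3, n_2, n_1)$. Mirroring Lemma~\ref{q3_10}, I would introduce auxiliary integer-valued functions $\tilde F, \tilde G, \tilde H$ built from $\binom{x}{4}, \binom{x}{3}, \binom{x}{2}, \binom{x}{1}$, and prove a Pascal-telescoping identity of the form
\[
\tilde F(n) - \bigl[\tilde F(n-1) + \tilde G(n-2) + \tilde H(n-3)\bigr] = \text{(linear in $n$)},
\]
in the spirit of the analogous computations $n-8$ (for $q=9$) and $n-9$ (for $q=10$) in Lemma~\ref{q3_10}. Combined with the eventual monotonicity of $\tilde F, \tilde G, \tilde H$ on their respective ranges, this settles the inequality for all $n$ above some explicit threshold $N$. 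The borderline case $n_3 = n$ is immediate: it forces $\alpha_3 = \binom{n}{3}$, hence (by KK lower bound together with $\alpha_2 \le \binom{n}{2}$) $\alpha_2 = \binom{n}{2}$ and $\alpha_4 \le \binom{n}{4}$, and the target reduces to an identity.

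For the finitely many remaining values $n = 10, 11, \ldots, N$, I would produce tables of $\tilde F, \tilde G, \tilde H$ exactly as in Lemma~\ref{q3_10}, enumerate the admissible KK triples $(n_3, n_2, n_1)$ compatible with the lower bound $\alpha_3 \ge 21n - 112$ (obtained by chaining $\beta_3^8 \ge 0$ and $\beta_2^8 \ge 0$) and with Lemma~\ref{b37}, and check the inequality in each subcase.

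The main obstacle will be a handful of small-$n$ edge cases in which the KK upper bound on $\alpha_4$, together with the $\beta_3^8 \ge 0$ bound on $\alpha_2$, still leaves the inequality uncomfortably close to its threshold. As with the $q=10, n=15$ subcase of Lemma~\ref{q3_10}, I expect these to be resolved by invoking $\beta_4^8 \ge 0$ to obtain a lower bound on $\alpha_4$ that contradicts the Kruskal--Katona upper bound forced by the hypothesized $\alpha_3$, thereby eliminating the offending triples.
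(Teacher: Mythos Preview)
Your proposal is essentially correct and follows the same architecture as the paper's proof: Kruskal--Katona expansion of $\alpha_3$, auxiliary functions $f(x)=\binom{x}{4}-4\binom{x}{3}$, $g(x)=\binom{x}{3}-4\binom{x}{2}$, $h(x)=\binom{x}{2}-4x$, a telescoping identity $f(n)-f(n-1)-g(n-2)-h(n-3)=n-7$ to dispose of large $n$, and case-by-case tables for $10\le n\le 18$.

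One point deserves correction. You dismiss the trivial bound $\alpha_2\le\binom{n}{2}$ as too generous, but the paper uses exactly this bound as the \emph{first} move, reducing the target to $\alpha_4-4\alpha_3\le f(n)$. Your counterexample $(n,\alpha_3)=(11,120)$ is real, but the paper does not attempt to prove $f(n_3)+g(n_2)+h(n_1)\le f(n)$ unconditionally for small $n$; instead it keeps track of the slack $10\bigl(\binom{n}{2}-\alpha_2\bigr)$ and runs an iterated dichotomy: either $\alpha_2$ is small enough that the slack closes the gap, or $\alpha_2$ is large enough that $\beta_3^8\ge0$ forces $\alpha_3$ up into a regime where $f(n_3)+g(n_2)+h(n_1)$ is sufficiently negative. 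This back-and-forth is logically equivalent to your plan of substituting $6\alpha_2\le\alpha_3+21n-56$ into the target, but it avoids building fractional coefficients into $\tilde F,\tilde G,\tilde H$ and keeps the tabulation over integers. The paper never needs $\beta_4^8\ge0$ here; the $\beta_2^8,\beta_3^8$ constraints alone suffice.
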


\begin{proof}
Since $\beta_2^8\geq 0$ and $\beta_3^8\geq 0$ it follows that 
\begin{equation}\label{condi8}
\alpha_2 \geq 7n-28,\;\alpha_3 \geq 6\alpha_2-21n+56 \geq 21n-112.
\end{equation}
Since $\beta_4^{7}=\alpha_4-4\alpha_3+10\alpha_2-20n+35$ and $\alpha_2\leq \binom{n}{2}$, 
in order to prove that $\beta_4^7\leq \binom{n-4}{4}$ it is enough to show that
$$\alpha_4-4\alpha_3\leq \binom{n}{4}-4\binom{n}{3}.$$
Moreover, if $\alpha_2\leq \binom{n}{2}-A$, then it is enough to show that
$$\alpha_4-4\alpha_3 - 10A\leq \binom{n}{4}-4\binom{n}{3}.$$
If $\alpha_3=\binom{n}{3}$ then there is nothing to prove, hence we may assume $\alpha_3=\binom{n_3}{3}+\binom{n_2}{2}+\binom{n_1}{1}$
with $n>n_3>n_2>n_1\geq 0$. We consider the functions:
$$f(x)=\binom{x}{4}-4\binom{x}{3},\;g(x)=\binom{x}{3}-4\binom{x}{2}\text{ and }h(x)=\binom{x}{2}-4x.$$
Since $\alpha_4\leq \binom{n_3}{4}+\binom{n_2}{3}+\binom{n_1}{2}$, in order to get the required result, it is enough
to show that 
\begin{equation}\label{pohtit}
f(n_3)+g(n_2)+h(n_1)\leq f(n).
\end{equation}
We have the following table of values:

\begin{center}
\begin{table}[tbh]
\begin{tabular}{|c|c|c|c|c|c|c|c|c|c|c|}
\hline
$x$    & 1 & 2  & 3  & 4  & 5  & 6  & 7  & 8   & 9   & 10  \\ \hline
$f(x)$ & 0 & 0  & -4 & -15& -35& -65&-105& -154& -210& -270 \\ \hline
$g(x)$ & 0 & -4 & -11& -20& -30& -40& -49& -56 & -60 & -60 \\ \hline
$h(x)$ & -4& -7 & -9 & -10& -10& -9 & -7 & -4  &  0  &  5 \\ \hline
\end{tabular}
\end{table}

\begin{table}[tbh]
\begin{tabular}{|c|c|c|c|c|c|c|c|c|c|c|c|}
\hline
$x$    &  11 & 12  & 13  & 14  & 15  & 16  & 17  & 18  & 19  \\ \hline
$f(x)$ & -330&-385 & -429& -455& -455& -420& -340& -204& 0 \\ \hline
$g(x)$ & -55 & -44 & -26 & 0   & 35  & 80  & 136 & 204 & 285 \\ \hline
$h(x)$ &  11 & 18  & 26  & 35  & 45  & 56  & 68  & 81  & 95 \\ \hline
\end{tabular}
\end{table}
\end{center}

As $f(19)=0$, if $n\geq 20$ then 
$$f(n)-(f(n_3)+g(n_2)+h(n_1))\geq f(n)-(f(n-1)+g(n-2)+h(n-3))=n-7\geq 0,$$
and thus we are done. So, we can assume $n\leq 19$.
\begin{itemize}
\item $n=19$. From the table of values of $f,g$ and $h$ it follows that
$$f(n_3)+g(n_2)+h(n_3)\leq f(18)+g(17)+h(16)=-12 < f(19)=0,$$
and thus \eqref{pohtit} is satisfied.
\item $n=18$. 
      From the table of values of $f,g$ and $h$ it follows that
			$$f(n_3)+g(n_2)+h(n_1)\leq f(17)+g(16)+h(15)= -215 < -204 = f(18).$$			
\item $n=17$. 
      From the table of values of $f,g$ and $h$ it follows that
			$$f(n_3)+g(n_2)+h(n_1)\leq f(16)+g(15)+h(14)= -350 < -340 = f(17).$$
\item $n=16$. From \eqref{condi8} it follows that $\alpha_3\geq 224 \geq \binom{12}{3}$.
      If $\alpha_2\leq 116 = \binom{16}{2}-4$ then
			$$\beta_4^7 \leq f(12)-4\cdot 10 = -425 < -420=f(16),$$
			and we are done. If $\alpha_2\geq 117$ then from \eqref{condi8} it follows that
			$$\alpha_3 \geq 6\cdot 117-21\cdot 16+56 = 422 \geq \binom{14}{3}.$$
			It follows that $$\beta_4^7\leq f(14)=-455<-420=f(16),$$
			and thus we are done again.
\item $n=15$. From \eqref{condi8} it follows that $\alpha_3\geq 203 =\binom{11}{3}+\binom{9}{2}+\binom{2}{1}$.
      If $\alpha_2\leq 98 =\binom{15}{2}-7$ then
			$$\beta_4^7 \leq f(11)+g(9)-7\cdot 10 = -460 < -455=f(15),$$
			and we are done. Thus, we may assume $\alpha_2\geq 99$. From \eqref{condi8} it follows that
			$$\alpha_3\geq 6\cdot 99 - 21\cdot 15 + 56 = 335 = \binom{13}{3}+\binom{10}{2}+\binom{4}{1}.$$
			It follows that $\beta_4^7 \leq f(14) = -455 = f(15)$ and we are done.
\item $n=14$. From \eqref{condi8} it follows that $\alpha_3\geq 182=\binom{11}{3}+\binom{6}{2}+\binom{2}{1}$.
      If $\alpha_2\leq 83 =\binom{14}{2}-8$ then
			$$\beta_4^7 \leq f(11)+g(6)+h(2)-8\cdot 10 = -457 < -455=f(14),$$
			and thus we may assume $\alpha_2\geq 84$. From \eqref{condi8} it follows that
			$$\alpha_3\geq 6\cdot 84 - 21\cdot 14 + 56 = 266 = \binom{12}{3}+\binom{10}{2}+\binom{1}{1}.$$
			It follows that $f(n_3)+g(n_2)+h(n_1)\leq f(13)=-429$ and thus, if $\alpha_2\leq 88$ then
			$$\beta_4^7 \leq -459 < -455=f(14),$$
			as required. Now assume $\alpha_2\geq 89$. From \eqref{condi8} it follows that 
			$$\alpha_3\geq 296=\binom{13}{3}+\binom{5}{2}.$$
			Therefore $\beta_4^7\leq f(13)+g(5)=-459 < -455=f(14)$ and we are done.			
\item $n=13$. From \eqref{condi8} it follows that $\alpha_3\geq 161=\binom{10}{3}+\binom{9}{2}+\binom{5}{1}$.

      If $\alpha_2\leq  \binom{13}{2}-10=68$ then 
			$$\beta_4^7 \leq f(11)-10\cdot 10 = -430 < -429=f(13),$$
			and we are done. Assume $\alpha_2\geq 69$. From \eqref{condi8} it follows that
			$$\alpha_3\geq 6\cdot 69 - 21\cdot 13 + 56 = 197 = \binom{11}{3}+\binom{8}{2}+\binom{4}{1}.$$
			If $\alpha_2\leq  \binom{13}{2}-5=73$ then 
			$$\beta_4^7 \leq f(12)-5\cdot 10 = -435 < -429=f(13),$$
			and we are done again. Assume $\alpha_2\geq 74$. From \eqref{condi8} it follows that
			$$\alpha_3\geq 6\cdot 74 - 21\cdot 13 + 56 = 227 = \binom{12}{3}+\binom{4}{2}+\binom{1}{1}.$$
			If $\alpha_2\leq  \binom{13}{2}-2=76$ then
			$$\beta_4^7 \leq f(12)+g(4)+h(1)-2\cdot 10 = -432 < -429=f(13),$$
			so we can assume $\alpha_2\in \{77,78\}$. From \eqref{condi8} it follows that
			$$\alpha_3\geq 6\cdot 77 - 21\cdot 13 + 56 = 245 = \binom{12}{3}+\binom{7}{2}+\binom{4}{1}.$$
			It follows that $\beta_4^7 \leq f(12)+g(11)=-440<f(13)$ and we are done.
\item $n=12$. From \eqref{condi8} it follows that $\alpha_3\geq 140=\binom{10}{3}+\binom{6}{2}+\binom{5}{1}$.

      If $\alpha_2\leq \binom{12}{2}-7=59$ then
			$$\beta_4^7 \leq f(10)+g(7)-7\cdot 10 = -389 < -385 = f(12),$$
			and we are done. Assume $\alpha_2\geq 60$. From \eqref{condi8} it follows that
			$$\alpha_3 \geq 6\cdot 60 -21\cdot 12 + 56 = 164 = \binom{11}{3}-1.$$
			If $\alpha_2=60$ then $\beta_4^7 \leq f(11)-60=-390<f(12)$ and we are done. Hence we may
			assume $\alpha_2\geq 61$ and therefore, from  \eqref{condi8}, we get
			$$\alpha_3\geq 170=\binom{11}{3}+\binom{3}{2}+\binom{2}{1}.$$
			If $\alpha_2\leq 62$ then 
			$$\beta_4^7 \leq f(11)+g(3)+h(2)-4\cdot 10=-388<-385=f(12),$$
			and we are done. Assume $\alpha_2\geq 63$. From  \eqref{condi8} we get 
			$$\alpha_3\geq 182=\binom{11}{3}+\binom{6}{2}+\binom{2}{1}.$$
			If $\alpha_2\leq 65$ then $$\beta_4^7\leq f(11)+g(6)+h(2)-10=-385<f(12),$$ and we are done.
			Finally, if $\alpha_2=66$ then, from  \eqref{condi8}, we have 
			$$\alpha_3\geq 188=\binom{11}{3}+\binom{7}{2}+\binom{2}{1}.$$
			It follows that $$\beta_4^7\leq f(11)+g(7)+h(2)=-386<-385=f(12),$$ and we are done.			
\item $n=11$. From \eqref{condi8} it follows that $\alpha_3\geq 119=\binom{10}{3}-1$. 

      If $\alpha_2\leq \binom{11}{2}-6=49$
      then $$\beta_4^7 \leq f(10) - 60 = -330 = f(11),$$ and we are done, so we may assume $\alpha_2\geq 50$.
			From \eqref{condi8} it follows that $$\alpha_3\geq 125 = \binom{10}{3}+\binom{3}{2}+\binom{2}{1}.$$
			If $\alpha_2=50$ then $$\beta_4^7 \leq f(10)+g(3)+h(2)-50=-338<f(11),$$ and we are done, so we may assume $\alpha_2\geq 51$
			and therefore, from  \eqref{condi8}, we have $$\alpha_3\geq 131 = \binom{10}{3}+\binom{5}{2}+\binom{1}{1}.$$ 
			If $\alpha_2\leq 52$ then
			$$\beta_4^7 \leq f(10)+g(5)+h(1)-30=-334<f(11),$$ and we are done, so we may assume $\alpha_2\geq 53$.
			From  \eqref{condi8} it follows that $$\alpha_3\geq 143 = \binom{10}{3}+\binom{7}{2}+\binom{2}{1}.$$ 
			If $\alpha_2\leq 54$
			then $$\beta_4^7\leq f(10)+g(7)+h(2)-10 = -336<f(11),$$ and we are done. Finally, if $\alpha_2=55$
			then  \eqref{condi8} gives $$\alpha_3\geq 155= \binom{10}{3}+\binom{8}{2}+\binom{7}{1}.$$ It follows that
			$$\beta_4^7\leq f(10)+g(9)=-330=f(11),$$ and we are done.
\item $n=10$. From \eqref{condi8} it follows that $\alpha_3\geq 98=\binom{9}{3}+\binom{5}{2}+\binom{4}{1}$.

      If $\alpha_2\leq \binom{10}{2}-3=42$ then $$\beta_4^7 \leq f(9)+g(5)+h(4)-30=-277<-270=f(10),$$ and we are done,
			so we may assume $\alpha_2\geq 43$. From \eqref{condi8} it follows that 
			$$\alpha_3\geq 104 =\binom{9}{3}+\binom{6}{2}+\binom{5}{1}.$$
			If $\alpha_2=43$ then $$\beta_4^7\leq f(9)+g(7)-20=-279<-270=f(10),$$ and we are done. 
			Assume $\alpha_2\geq 44$.
			Then, by  \eqref{condi8}, we have $$\alpha_3\geq 110=\binom{9}{3}+\binom{7}{2}+\binom{5}{1}.$$ 
			If $\alpha_2=44$ then 
			$$\beta_4^7 \leq f(9)+g(8)-10=-276<f(10),$$ and we are done. 
			Finally, if $\alpha_2=45$ then, by  \eqref{condi8}, we have $$\alpha_3\geq 116=\binom{9}{3}+\binom{8}{2}+\binom{4}{1}.$$
			It follows that $$\beta_4^7 \leq f(9)+g(8)+h(7)=-273<-270=f(10),$$ and we are done again.
\end{itemize}
Thus, the proof is complete.
\end{proof}

\begin{lema}\label{q5_8}
If $q=8$ then $\beta_5^{7} \leq \binom{n-3}{5}$.
\end{lema}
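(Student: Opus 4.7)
The plan is to adapt the strategy of Lemma \ref{q4_8}. Expanding
\[
\beta_5^7 = \alpha_5 - 3\alpha_4 + 6\alpha_3 - 10\alpha_2 + 15n - 21,
\]
and using the Chu-Vandermonde identity \eqref{combi} to obtain $\binom{n-3}{5} = \binom{n}{5} - 3\binom{n}{4} + 6\binom{n}{3} - 10\binom{n}{2} + 15n - 21$, the statement is equivalent to
\[
\alpha_5 - 3\alpha_4 + 6\alpha_3 - 10\alpha_2 \leq \binom{n}{5} - 3\binom{n}{4} + 6\binom{n}{3} - 10\binom{n}{2}.
\]
The hypothesis $\qdepth(S/I) = 8$ supplies, via $\beta_2^8, \beta_3^8, \beta_4^8, \beta_5^8 \geq 0$, a chain of lower bounds on $\alpha_2, \alpha_3, \alpha_4, \alpha_5$, beginning with $\alpha_2 \geq 7n - 28$ and propagating upward.

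Next I would write $\alpha_4$ in its Kruskal-Katona form $\alpha_4 = \binom{n_4}{4} + \binom{n_3}{3} + \binom{n_2}{2} + \binom{n_1}{1}$ with $n > n_4 > n_3 > n_2 > n_1 \geq 0$; by Lemma \ref{cord} this forces $\alpha_5 \leq \binom{n_4}{5} + \binom{n_3}{4} + \binom{n_2}{3} + \binom{n_1}{2}$. Introducing the auxiliary functions
\[
F(x) = \binom{x}{5} - 3\binom{x}{4},\ G(x) = \binom{x}{4} - 3\binom{x}{3},\ H(x) = \binom{x}{3} - 3\binom{x}{2},\ K(x) = \binom{x}{2} - 3x,
\]
one obtains $\alpha_5 - 3\alpha_4 \leq F(n_4) + G(n_3) + H(n_2) + K(n_1)$; after bounding the $6\alpha_3 - 10\alpha_2$ contribution via $\alpha_3 \leq \binom{n}{3}$ together with the lower bound $\alpha_2 \geq 7n - 28$, the target reduces to a combinatorial inequality of the shape $F(n_4) + G(n_3) + H(n_2) + K(n_1) \leq F(n) + (\text{correction})$.

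For $n$ sufficiently large -- beyond the last zero of $F$, which sits near $n = 20$ -- this follows from the telescoping identity $F(n) - F(n-1) - G(n-2) - H(n-3) - K(n-4) = n - 7 > 0$ together with the eventual monotonicity of $F, G, H, K$ in their respective increasing ranges, which forces the maximal Kruskal-Katona profile $(n_4, n_3, n_2, n_1) = (n-1, n-2, n-3, n-4)$ to dominate. For small $n$ the argument proceeds by finite case analysis in the spirit of Lemma \ref{q4_8}: tabulate the values of $F, G, H, K$ on the relevant integer interval, use the lower bound on $\alpha_2$ and the chain of induced bounds on $\alpha_3, \alpha_4$ to restrict the admissible Kruskal-Katona decompositions, and verify the inequality case by case. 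The main obstacle -- and the lengthiest part of the proof -- will be this small-$n$ case analysis, which is heavier than in Lemma \ref{q4_8} because the Kruskal-Katona expansion of $\alpha_4$ now carries four binomial summands rather than three; in the borderline cases one likely has to iterate, successively strengthening the assumed value of $\alpha_2$ and invoking $\beta_5^8 \geq 0$ to exclude the remaining possibilities, much as was done for $n = 15$ when $q = 10$ in Lemma \ref{q3_10}.
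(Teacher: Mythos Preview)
Your overall architecture matches the paper: expand $\beta_5^7$, Kruskal--Katona on $\alpha_4$, auxiliary functions $f_k(x)=\binom{x}{k}-3\binom{x}{k-1}$, telescoping for large $n$, finite case analysis for small $n$. But your handling of the term $6\alpha_3-10\alpha_2$ is a genuine gap. Using $\alpha_3\le\binom{n}{3}$ together with $\alpha_2\ge 7n-28$ yields
\[
6\alpha_3-10\alpha_2 \le 6\binom{n}{3}-70n+280,
\]
so your ``correction'' on the right-hand side is $-10\binom{n}{2}+70n-280=-5n^2+75n-280$, which is negative and grows quadratically in $n$. The telescoping identity only buys you a linear gain $n-7$, so for every $n\ge 10$ your reduced inequality $\alpha_5-3\alpha_4\le F(n)+(\text{correction})$ is strictly stronger than what Kruskal--Katona can deliver; the large-$n$ argument collapses entirely.

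The paper fixes this in two ways. First, it proves the clean claim $6\alpha_3-10\alpha_2\le 6\binom{n}{3}-10\binom{n}{2}$ by a separate Kruskal--Katona argument on $\alpha_2$ (not on $\alpha_4$): writing $\alpha_2=\binom{n_2}{2}+\binom{n_1}{1}$ and bounding $\alpha_3$ accordingly gives $6\alpha_3-10\alpha_2\le f(n_2)+g(n_1)$ with $f(x)=x(x-1)(x-7)$, $g(x)=x(3x-13)$, and then $f(n-1)+g(n-2)=f(n)-(6n-22)\le f(n)$. This reduces cleanly to $\alpha_5-3\alpha_4\le F(n)$ with no correction. Second, for the small-$n$ case analysis the paper does not rely on the static lower bound $\alpha_2\ge 7n-28$; instead it derives from $\beta_4^8\ge 0$ the adaptive bound
\[
6\alpha_3-10\alpha_2 \le \tfrac{6}{5}\alpha_4 + 4n^2 - 46n + 84,
\]
which ties the hard-to-control piece to $\alpha_4$ itself. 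This is what makes the iterative descent on $\alpha_4$ work: each time you lower the assumed upper bound on $\alpha_4$, the bound on $6\alpha_3-10\alpha_2$ improves. Finally, the paper also reduces at the outset via $\beta_5^7\le\alpha_5$ to the case $\alpha_5>\binom{n-3}{5}$, forcing $\alpha_k\ge\binom{n-3}{k}+\binom{4}{k}$ and hence $n_4\ge n-3$; you should incorporate this as well, since without it the Kruskal--Katona tail is unconstrained.
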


\begin{proof}
Since $\beta_5^7=\beta_5^6-\beta_4^6\leq \beta_5^6$,
$\beta_5^6 = \beta_5^5 - \beta_4^5 \leq \beta_5^5$ and $\beta_5^5 = \alpha_5-\beta_4^4 \leq \alpha_5$,
it follows that we can assume $\alpha_5\geq \binom{n-3}{5}+1$, otherwise there is nothing to prove.
Therefore, we can assume that
\begin{equation}\label{codita}
\alpha_k \geq \binom{n-3}{k}+\binom{4}{k}\text{ for all }2\leq k\leq 5.
\end{equation}
Since $\beta_2^8\geq 0$, $\beta_3^8\geq 0$ and $\beta_4^8\geq 0$ it follows that 
\begin{equation}\label{condy}
\begin{split}
& \alpha_2 \geq 7(n-4),\;\alpha_3 \geq 6\alpha_2-21n+56 \geq 21n-112 \\
& \alpha_4 \geq 5\alpha_3 - 15\alpha_2 + 35n - 70 \geq 35n-210.
\end{split}
\end{equation}
From \eqref{condy} it follows that
\begin{equation}\label{ccondy}
6\alpha_3-10\alpha_2  = 6\alpha_3 - 18\alpha_2 + 8\alpha_2 \leq \frac{6}{5}(\alpha_4+70-35n)+8\binom{n}{2}= \frac{6}{5}\alpha_4 + 84 + 4n^2 - 46 n.
\end{equation}
Moreover, if $\alpha_2=\binom{n}{2}-A$ then, as in \eqref{ccondy} we have
\begin{equation}\label{cccon}
6\alpha_3-10\alpha_2 \leq \frac{6}{5}\alpha_4 + 84 + 4n^2 - 46 n - 8A.
\end{equation}
On the other hand, we have 
\begin{equation}\label{b57}
\beta_5^7 = \alpha_5 - 3\alpha_4 + 6\alpha_3 - 10\alpha_2 + 15n - 21.
\end{equation}
We claim that: 
\begin{equation}\label{klam}
6\alpha_3-10\alpha_2\leq 6\binom{n}{3}-10\binom{n}{2}.
\end{equation}
If $\alpha_2=\binom{n}{2}$ then \eqref{klam} is obvious. Hence, we may assume $\alpha_2=\binom{n_2}{2}+\binom{n_1}{1}$
with $n>n_2>n_1\geq 0$. It follows that $\alpha_3\leq \binom{n_2}{3}+\binom{n_1}{2}$. We consider the functions:
$$f(x)=6\binom{x}{3}-10\binom{x}{2}=x(x-1)(x-7)\text{ and }g(x)=6\binom{x}{2}-10\binom{x}{1}=x(3x-13).$$
Since $n\geq 10$ it follows that
$$6\alpha_3-10\alpha_2\leq f(n_2)+g(n_1)\leq f(n-1)+g(n-2)=f(n)-6n+22\leq f(n),$$
so the claim \eqref{klam} is true. From \eqref{b57} and \eqref{klam} it follow that,
in order to prove that $\beta_5^7\leq \binom{n-3}{5}$, it is enough to show that
$$\alpha_5-3\alpha_4 \leq \binom{n}{5}-3\binom{n}{4}.$$
If $\alpha_4=\binom{n}{4}$ then there is nothing to prove, so we may assume 
$$\alpha_4=\binom{n_4}{4}+\binom{n_3}{3}+\binom{n_2}{2}+\binom{n_1}{1}\text{ where }n>n_4>n_3>n_2>n_1\geq 0,$$
and, also, by \eqref{codita}, that $n_4\geq n-3$. We consider the functions:
$$f_k(x)=\binom{x}{k}-3\binom{x}{k-1},\;\text{ for }2\leq k\leq 5.$$
From the above considerations and Lemma \ref{cord}, in order to complete the proof, 
it is enough to show that 
$$f_5(n_4)+f_4(n_3)+f_3(n_2)+f_2(n_1)\leq f_5(n).$$
We have the following table of values for the functions $f_k(x)$'s:

\begin{center}
\begin{table}[tbh]
\begin{tabular}{|c|c|c|c|c|c|c|c|c|c|c|}
\hline
$x$    & 1 & 2  & 3  & 4  & 5  & 6  & 7  & 8   & 9   & 10  \\ \hline
$f_5(x)$ & 0 & 0  & 0 & -3& -14& -39& -84& -154& -252& -378 \\ \hline
$f_4(x)$ & 0 & 0  & -3& -11& -25& -45& -70& -98& -126& -150\\ \hline
$f_3(x)$ & 0 & -3 & -8& -14& -20& -25& -28& -28& -24 & -15 \\ \hline
$f_2(x)$ & -3& -5 & -6& -6 & -5 & -3 & 0  & 4  & 9   & 15 \\ \hline
\end{tabular}
\end{table}

\begin{table}[tbh]
\begin{tabular}{|c|c|c|c|c|c|c|c|c|c|c|c|}
\hline
$x$      &  11 & 12  & 13  & 14   & 15   & 16   & 17  & 18  & 19  \\ \hline
$f_5(x)$ & -528& -693& -858& -1001& -1092& -1092& -952& -612& 0  \\ \hline
$f_4(x)$ & -165& -165& -143& -91  & 0    & 140  & 340 & 612 & 969 \\ \hline
$f_3(x)$ & 0   & 22  & 52  & 91   & 140  & 200  & 272 & 357 & 456 \\ \hline
$f_2(x)$ & 22  & 30  & 39  & 49   & 60   & 72   & 85  & 99  & 114 \\ \hline
\end{tabular}
\end{table}
\end{center}

If $n\geq 20$ then 
\begin{align*}
& f_5(n)-(f_5(n_4)+f_4(n_3)+f_3(n_2)+f_2(n_1))\geq \\
& f_5(n)-(f_5(n-1)+f_4(n-2)+f_3(n-3)+f_2(n-4)) = n-7 \geq 0,
\end{align*}
and thus we are done. Similarly, if $n=19$ it is easy to see from the table of values that 
$$f_5(n_4)+f_4(n_3)+f_3(n_2)+f_2(n_1)\leq f_5(18)+f_4(17)+f_3(16)+f_2(15)= -12<f_5(19)=0,$$ as required. Hence, we may assume $n\leq 18$.
\begin{itemize}
\item $n=18$. From \eqref{codita} it follows that $\alpha_4\geq \binom{15}{4}+4$.
      From the table of values of $f_k(x)$'s it follows that
      $$\alpha_5-3\alpha_4\leq f_5(17)+f_4(16)+f_3(15)+f_2(14) = -623 < f_5(18)=-612,$$
      as required.
\item $n=17$. From \eqref{codita} it follows that $\alpha_4\geq \binom{14}{4}+4$.
      From the table of values of $f_k(x)$'s it follows that
      $$\alpha_5-3\alpha_4\leq f_5(16)+f_4(15)+f_3(14)+f_2(13)=-962 < f_5(17)=-952,$$
\item $n=16$. From \eqref{codita} it follows that $\alpha_4\geq \binom{13}{4}+4$.
      If $\alpha_4\geq \binom{14}{4}+\binom{7}{3}+\binom{5}{2}+\binom{1}{1}$, then, from the
			table of values of $f_k(x)$'s it follows that 
			$$\alpha_5-3\alpha_4 \leq f_5(14)+f_4(7)+f_3(5)+f_2(1) = -1094  < -1092 = f_5(16).$$
			Hence, we may assume $\alpha_4\leq \binom{14}{4}+\binom{7}{3}+\binom{5}{2}=1046$.
			From \eqref{ccondy} it follows that
			$$6\alpha_3-10\alpha_2 \leq \frac{6}{5}\cdot 1046 + 84 - 42\cdot 16 +64\cdot 15 = 1627.2$$
			Hence, in order to prove that $\beta_5^7\leq \binom{13}{5}$ it is enough to show
			that $$\alpha_5-3\alpha_4 \leq \binom{13}{5}-1628-15\cdot 16+21 = -560,$$ 
			which is true, as $\alpha_4\geq \binom{13}{4}$ and $f_5(13)=-858<-560$.
\item $n=15$. From \eqref{codita} it follows that $\alpha_4\geq \binom{12}{4}+4$. As in the case $n=16$,
      we can assume $\alpha_4\leq 1046$. From \eqref{ccondy} it follows that
			$$6\alpha_3-10\alpha_2 \leq \frac{6}{5}\cdot 1046 + 84 - 42\cdot 15 + 60\cdot 14 = 1549.2$$
      Hence, in order to prove that $\beta_5^7\leq \binom{12}{5}$ it is enough to show
			that $$\alpha_5-3\alpha_4 \leq \binom{12}{5}-1550-15\cdot 15+21 = -962.$$
			Since $f_5(13)+f_4(8)+f_3(2)+f_2(1)=-858-98-3-3=-962$,
			this condition is satisfied if $\alpha_4\geq \binom{13}{4}+\binom{8}{3}+\binom{2}{2}+\binom{1}{1}$
			and thus we may assume $$\alpha_4\leq \binom{13}{4}+\binom{8}{3}+\binom{2}{2}=772.$$ From \eqref{ccondy} it follows that
			$$6\alpha_3-10\alpha_2 \leq \frac{6}{5}\cdot 772 + 84 - 42\cdot 15 + 60\cdot 14 =  1220.2$$
			Hence, we have to prove that $\alpha_5-3\alpha_4 \leq -633$, which is true as 
			$f_5(12)=-693$ and $\alpha_4\geq \binom{12}{4}$.
\item $n=14$. From \eqref{codita} it follows that $\alpha_4\geq \binom{11}{4}+4$. In the following, we will skip some obvious steps
      in the argument, as they are similar to the previous cases.
			
      If $\alpha_4\geq \binom{13}{4}+\binom{9}{3}+\binom{4}{2}+\binom{1}{1}$ then $\alpha_5-3\alpha_4\leq f_5(14)$,
			hence we may assume $\alpha_4 \leq \binom{13}{4}+\binom{9}{3}+\binom{4}{2}=805$.
      From \eqref{ccondy} it follows that $6\alpha_3-10\alpha_2 \leq 1190$. Hence, in order to prove that $\beta_5^7\leq \binom{11}{5}$
			it is enough to show that $$\alpha_5-3\alpha_4 \leq \binom{11}{5}-1190-15\cdot 14 + 21 = -917.$$
			This condition holds for $\alpha_4\geq \binom{13}{4} + \binom{6}{3}+\binom{4}{2}=741$, hence we may assume
			$\alpha_4\leq 740$. From \eqref{ccondy} it follows that $6\alpha_3-10\alpha_2 \leq 1112$ and thus it is enough
			to show that $\alpha_5-3\alpha_4\leq - 839$. This is true for 
			$\alpha_4\geq \binom{12}{4}+\binom{9}{3}+\binom{4}{2}+\binom{3}{1}=588$ so we may assume $\alpha_4\leq 587$.
			From \eqref{ccondy} it follows that it is enough to show $\alpha_5-3\alpha_4\leq -656$. This is true for 
			$\alpha_4\geq \binom{11}{4}+\binom{9}{3}+\binom{2}{2}=415$ and thus we may assume $\alpha_4\leq 414$.
			From \eqref{ccondy} it follows that it is enough to show $\alpha_5-3\alpha_4\leq -448$, which is true since 
			$\alpha_4\geq \binom{11}{4}$.
			
\item $n=13$. From \eqref{condy} it follows that $\alpha_4\geq 245=\binom{10}{4}+\binom{7}{3}$.
      If $\alpha_4\geq \binom{12}{4}+\binom{10}{3}+\binom{4}{2}+\binom{1}{1}$ then $\alpha_5-3\alpha_4\leq f_5(13)$,
			hence we may assume $\alpha_4 \leq \binom{12}{4}+\binom{10}{3}+\binom{4}{2}=621$. From \eqref{ccondy} it follows that
			$$6\alpha_3-10\alpha_2 \leq \frac{6}{5}\cdot 621 + 84 + 4\cdot 13^2 - 46\cdot 13 = 907.2$$
			Hence, in order to prove that $\beta_5^7\leq \binom{10}{5}$ it is enough to show that
			$$\alpha_5-3\alpha_4 \leq \binom{10}{5}-908-15\cdot 13 + 21 = -830.$$
			This holds for $\alpha_4\geq \binom{12}{4}+\binom{9}{3}+\binom{3}{2}=582$ so we may assume $\alpha_4\leq 581$.
			
			From \eqref{ccondy} it follows that it is enough to show that $\alpha_5-3\alpha_4\leq -782$.
			This holds for $\alpha_4\geq \binom{12}{4}+\binom{7}{3}+\binom{4}{2}+\binom{2}{1}=538$ so we may assume $\alpha_4\leq 537$.
			
			From \eqref{ccondy} it follows that it is enough to show that $\alpha_5-3\alpha_4\leq -729$.
			This holds for $\alpha_4\geq \binom{12}{4}+\binom{5}{3}+\binom{3}{2}+\binom{1}{1}=509$ so we may assume $\alpha_4\leq 508$.
			
			From \eqref{ccondy} it follows that it is enough to show that $\alpha_5-3\alpha_4\leq -694$. This is true for 
			$\alpha_4>\binom{12}{4}$ so we may assume $\alpha_4\leq \binom{12}{4}=495$.
			
			From \eqref{ccondy} it follows that it is enough to show that $\alpha_5-3\alpha_4\leq -678$.
			This holds for $\alpha_4 \geq \binom{11}{4}+\binom{9}{3}+\binom{5}{2}+\binom{2}{1}=426$ so we may assume $\alpha_4\leq 425$.
			
			From \eqref{ccondy} it follows that it is enough to show that $\alpha_5-3\alpha_4\leq -594$.
			This holds for $\alpha_4 \geq \binom{11}{4}+\binom{6}{3}+\binom{5}{2}+\binom{1}{1}=361$ so we may assume $\alpha_4\leq 360$.
			
			From \eqref{ccondy} it follows that it is enough to show that $\alpha_5-3\alpha_4\leq -516$.
			This holds for $\alpha_4 \geq \binom{10}{4}+\binom{9}{3}+\binom{3}{2}+\binom{2}{1}=299$ so we may assume $\alpha_4\leq 298$.
			
			From \eqref{ccondy} it follows that it is enough to show that $\alpha_5-3\alpha_4\leq -442$. And this is true, as
			$\alpha_4\geq \binom{10}{4}+\binom{7}{3}$.
			
\item $n=12$. From \eqref{condy} it follows that $\alpha_4\geq 210=\binom{10}{4}$.
      If $\alpha_4\geq \binom{11}{4}+\binom{10}{3}+\binom{4}{2}+\binom{1}{1}$ then $\alpha_5-3\alpha_4\leq f_5(12)$,
			hence we may assume $\alpha_4 \leq \binom{11}{4}+\binom{10}{3}+\binom{4}{2}=456$. From \eqref{ccondy} it follows that
			$$6\alpha_3-10\alpha_2 \leq \frac{6}{5}\cdot 456 + 84 + 4\cdot 12^2 - 46\cdot 12 = 655.2$$
			Hence, in order to prove that $\beta_5^7\leq \binom{9}{5}$ it is enough to show that
			$$\alpha_5-3\alpha_4 \leq \binom{9}{5}-656-15\cdot 12 + 21 = -689.$$
			This condition holds for $\alpha_4\geq \binom{11}{4}+\binom{10}{3}+\binom{3}{2}+\binom{1}{1}$ so
			we may assume $\alpha_4\leq 453$.
			
			From \eqref{ccondy} it follows that it is enough to show that $\alpha_5-3\alpha_4\leq -685$.
			This holds for $\alpha_4 = 453$ so we may assume $\alpha_4\leq 452$. This means that we need to show that
			$\alpha_5-3\alpha_4\leq -684$ which is true for $\alpha_4=452$. So we may assume $\alpha_4\leq 451$ and
			therefore we need to show that $\alpha_5-3\alpha_4\leq -683$.
			
      Assume $\alpha_4=451$. If $\alpha_2 < \binom{12}{2}$, that is $\alpha_2\leq \binom{11}{2}+\binom{10}{1}$, it
			follows that $\alpha_4\leq \binom{11}{4}+\binom{10}{3}=450$, a contradiction. Hence  $\alpha_2=\binom{12}{2}$.
			Since $\beta_4^8\geq 0$ and $\alpha_4\leq 451$ it follows that $\alpha_3\leq \binom{12}{3}-2$,
			otherwise $\beta_4^8<0$, a contradiction.	By straightforward computations, it follows that 
			$$\beta_5^7 = \alpha_5-3\alpha_4 + 6\binom{12}{3}-12 - 10\binom{12}{2} + 15\cdot 12 - 21 \leq 126=\binom{9}{5},$$
			as required. So, we may assume $\alpha_4\leq 450$ and we need to show that $\alpha_5-3\alpha_4\leq -681$.
			
			Assume $\alpha_4=450=\binom{11}{4}+\binom{10}{3}$. It follows that $\alpha_2\geq \binom{12}{2}-1$. Assume $\alpha_2=\binom{12}{2}$.
			As in the case $\alpha_4=451$, from $\beta_4^8\geq 0$ we deduce $\alpha_3\leq \binom{12}{3}-2$. If $\alpha_3\leq \binom{12}{3}-3$
			then we can show that $\beta_5^7\leq 123$. On the other hand, if $\alpha_3 = \binom{12}{3}-2$ then $\alpha_5\leq \binom{11}{5}+\binom{10}{4}-6$,
			otherwise $\beta_6^8<0$, a contradiction. Therefore $\beta_5^7\leq 123$. On the other hand, if $\alpha_2=\binom{12}{2}-1=\binom{11}{2}+\binom{10}{1}$
			if follows that $\alpha_3 = \binom{11}{3}+\binom{10}{2}$. Therefore $\beta_5^7\leq 91 < 126$.
			
			So, we can assume $\alpha_4\leq 449$ and we have to show $\alpha_5-3\alpha_4\leq -680$. Assume $\alpha_4\geq 448$, that is 
			$\alpha_4\in \{448,449\}$. Note that $\alpha_2\geq \binom{12}{2}-2$. If $\alpha_2=\binom{12}{2}$ then, from $\beta_4^8\geq 0$ 
			it follows that $\alpha_3\leq \binom{12}{3}-3$. Since $\alpha_5\leq \binom{11}{5}+\binom{9}{4}+\binom{8}{3}+\binom{7}{2}$
			it follows that $\beta_5^7\leq 122$. On the other hand, if $\alpha_2\leq \binom{12}{2}-1$ then $\alpha_3\leq \binom{11}{3}+\binom{10}{2}$
			and we have $\beta_5^7\leq 90$.
			
			So, we may assume $\alpha_4\leq 447$ and we need to show that $\alpha_5-3\alpha_4\leq -678$. This is true for
			$\alpha_4\geq \binom{11}{4}+\binom{9}{3}+\binom{5}{2}+\binom{2}{1}=426$ so we may assume $\alpha_4\leq 425$.
			
			From \eqref{ccondy} it follows that we have to show that $\alpha_5-3\alpha_4\leq -651$. This is true for
			$\alpha_4\geq  \binom{11}{4}+\binom{8}{3}+\binom{5}{2}+\binom{2}{1}=398$ so we may assume $\alpha_4\leq 397$.
			
			From \eqref{ccondy} it follows that we have to show that $\alpha_5-3\alpha_4\leq -618$. This is true for
			$\alpha_4\geq  \binom{11}{4}+\binom{7}{3}+\binom{4}{2}+\binom{3}{1}=374$ so we may assume $\alpha_4\leq 373$.
			
			From \eqref{ccondy} it follows that we have to show that $\alpha_5-3\alpha_4\leq -589$. This is true for
			$\alpha_4\geq  \binom{11}{4}+\binom{6}{3}+\binom{4}{2}+\binom{1}{1}=357$ so we may assume $\alpha_4\leq 356$.

      From \eqref{ccondy} it follows that we have to show that $\alpha_5-3\alpha_4\leq -569$. This is true for
			$\alpha_4\geq  \binom{11}{4}+\binom{5}{3}+\binom{4}{2}+\binom{1}{1}=347$ so we may assume $\alpha_4\leq 346$.
			
			From \eqref{ccondy} it follows that we have to show that $\alpha_5-3\alpha_4\leq -557$. This is true for
			$\alpha_4\geq  \binom{11}{4}+\binom{5}{3}+\binom{2}{2}+\binom{1}{1}=342$ so we may assume $\alpha_4\leq 341$.
			
			From \eqref{ccondy} it follows that we have to show that $\alpha_5-3\alpha_4\leq -551$. This is true for
			$\alpha_4\geq  \binom{11}{4}+\binom{4}{3}+\binom{3}{2}+\binom{2}{1}=339$ so we may assume $\alpha_4\leq 338$.
			
			From \eqref{ccondy} it follows that we have to show that $\alpha_5-3\alpha_4\leq -547$. This is true for
			$\alpha_4\geq  \binom{11}{4}+\binom{4}{3}+\binom{3}{2}=337$ so we may assume $\alpha_4\leq 336$.
			
			From \eqref{ccondy} it follows that we have to show that $\alpha_5-3\alpha_4\leq -545$. This condition holds for $\alpha_4=336$
			so we may assume $\alpha_4\leq 335$ and we have to show that $\alpha_5-3\alpha_4\leq -543$.
			If $\alpha_2=\binom{12}{2}$ then, as $\beta_3^8\geq 0$, it follows that $\alpha_3\geq 200$. Hence
			$\beta_4^8\leq -25$, a contradiction. Similarly, if $\alpha_2=\binom{12}{2}-1$ then $\beta_3^8\geq 0$ forces $\alpha_3\geq 194$
			and we get $\beta_4^8\leq -10$. It follows that $\alpha_2\leq \binom{12}{2}-2$. Since $\alpha_4\leq 335$, from \eqref{cccon}
			it follows that it is enough to show that  $\alpha_5-3\alpha_4\leq -545+16=-529$. This is true for $\alpha_5\geq \binom{11}{4}+1=331$
		  so we may assume $\alpha_5\leq 330$.
			
			From \eqref{cccon} it follows that we have to show that $\alpha_5-3\alpha_4\leq -521$. This is true for
			$\alpha_4\geq \binom{10}{4}+\binom{9}{3}+\binom{4}{2}+\binom{1}{1}=301$, so we may assume $\alpha_5\leq 300$.
			
			From \eqref{cccon} it follows that we have to show that $\alpha_5-3\alpha_4\leq -485$. This is true for
			$\alpha_4\geq \binom{10}{4}+\binom{8}{3}+\binom{3}{2}+\binom{1}{1}=270$, so we may assume $\alpha_5\leq 269$.
			
			From \eqref{cccon} it follows that we have to show that $\alpha_5-3\alpha_4\leq -448$. This is true for
			$\alpha_4\geq \binom{10}{4}+\binom{6}{3}+\binom{5}{2}+\binom{2}{1}=242$, so we may assume $\alpha_5\leq 241$.
			
			From \eqref{cccon} it follows that we have to show that $\alpha_5-3\alpha_4\leq -415$. This is true for
			$\alpha_4\geq \binom{10}{4}+\binom{5}{3}+\binom{3}{2}+\binom{2}{1}= 225$, so we may assume $\alpha_5\leq 224$.
			
			From \eqref{cccon} it follows that we have to show that $\alpha_5-3\alpha_4\leq -394$. This is true for
			$\alpha_4\geq \binom{10}{4}+\binom{4}{3}+\binom{2}{2}+\binom{1}{1}=216$, so we may assume $\alpha_5\leq 215$.
			
			From \eqref{cccon} it follows that we have to show that $\alpha_5-3\alpha_4\leq -383$. This is true for
			$\alpha_4\geq \binom{10}{4}+2=212$, so we may assume $\alpha_5\leq 211$. 
			
			From \eqref{cccon} it follows that we have to show that $\alpha_5-3\alpha_4\leq -379$, a condition which is
			satisfied for $\alpha_4=211$ so we may assume $\alpha_4=210$ (since $\alpha_4\geq 210$) and we have to
			show that $\alpha_5-3\alpha_4\leq -377$ which is satisfied by $\alpha_4=210$.

\item $n=11$. From \eqref{condy} it follows that $\alpha_4\geq 175=\binom{9}{4}+\binom{7}{3}+\binom{5}{2}+\binom{4}{1}$.
      If $\alpha_4\geq \binom{10}{4}+\binom{9}{3}+\binom{5}{2}+\binom{2}{1}=306$ then $\alpha_5-3\alpha_4\leq f_5(11)$,
			hence we may assume $\alpha_4 \leq 305$. From \eqref{ccondy} it follows that
			$$6\alpha_3-10\alpha_2 \leq \frac{6}{5}\cdot 305 + 84 + 4\cdot 11^2 - 46\cdot 11 = 428.$$
			Hence, in order to prove that $\beta_5^7\leq \binom{8}{5}$ it is enough to show that
			$$\alpha_5-3\alpha_4 \leq \binom{8}{5}-428-15\cdot 11 + 21 = -516.$$
			This is true for $\alpha_4\geq \binom{10}{4}+\binom{9}{3}+\binom{3}{2}+\binom{2}{1}=299$ so we may assume $\alpha_4\leq 298$.
			From \eqref{ccondy} it follows that it is enough to show that $\alpha_5-3\alpha_4\leq -508$.
			This is true for $\alpha_4\geq \binom{10}{4}+\binom{9}{3}+\binom{2}{2}+\binom{1}{1}=296$ so we may assume $\alpha_4\leq 295$.
			
			From \eqref{ccondy} it follows that it is enough to show that $\alpha_5-3\alpha_4\leq -504$.
			This is true for $\alpha_4\geq \binom{10}{4}+\binom{8}{3}+\binom{6}{2}+\binom{1}{1}=282$ so we may assume $\alpha_4\leq 281$.
			
			From \eqref{ccondy} it follows that it is enough to show that $\alpha_5-3\alpha_4\leq -488$.
			This is true for $\alpha_4\geq \binom{10}{4}+\binom{8}{3}+\binom{3}{2}+\binom{2}{1}=271$ so we may assume $\alpha_4\leq 270$.
			
			From \eqref{ccondy} it follows that it is enough to show that $\alpha_5-3\alpha_4\leq -474$.
			This is true for $\alpha_4\geq \binom{10}{4}+\binom{7}{3}+\binom{5}{2}+\binom{4}{1}=259$ so we may assume $\alpha_4\leq 258$.
			
			From \eqref{ccondy} it follows that it is enough to show that $\alpha_5-3\alpha_4\leq -460$.
			This is true for $\alpha_4\geq \binom{10}{4}+\binom{7}{3}+\binom{3}{2}+\binom{2}{1}=250$ so we may assume $\alpha_4\leq 249$.
			
			From \eqref{ccondy} it follows that it is enough to show that $\alpha_5-3\alpha_4\leq -449$.
			This is true for $\alpha_4\geq \binom{10}{4}+\binom{7}{3}+\binom{2}{2}=246$ so we may assume $\alpha_4\leq 245$.
			
			From \eqref{ccondy} it follows that it is enough to show that $\alpha_5-3\alpha_4\leq -444$.
			This is true for $\alpha_4\geq \binom{10}{4}+\binom{6}{3}+\binom{5}{2}+\binom{1}{1}=241$ so we may assume $\alpha_4\leq 240$.
			
			From \eqref{ccondy} it follows that it is enough to show that $\alpha_5-3\alpha_4\leq -438$.
      This is true for $\alpha_4\geq \binom{10}{4}+\binom{6}{3}+\binom{4}{2}+\binom{1}{1}=237$ so we may assume $\alpha_4\leq 236$.
			
			From \eqref{ccondy} it follows that it is enough to show that $\alpha_5-3\alpha_4\leq -434$.
			This is true for $\alpha_4\geq \binom{10}{4}+\binom{6}{3}+\binom{3}{2}+\binom{1}{1}=234$ so we may assume $\alpha_4\leq 233$.
			
			From \eqref{ccondy} it follows that it is enough to show that $\alpha_5-3\alpha_4\leq -430$. This is true for $\alpha_4=233$
			so we may assume $\alpha_4\leq 232$. Hence we have to show that $\alpha_5-3\alpha_4\leq -429$. This is true for $\alpha_4=232$
			so we may assume $\alpha_4\leq 231$ and we have to show that $\alpha_5-3\alpha_4\leq -428$.
			
			If $\alpha_2=\binom{11}{2}$ then $\beta_3^8\geq 0$ implies $\alpha_3\geq \binom{11}{3}-10$. Since $\alpha_4\leq 231$ it
			follows that $\beta_4^8\leq -34$, a contradiction. If $\alpha_2=\binom{11}{2}-1$ then $\beta_3^8\geq 0$ implies $\alpha_3\geq \binom{11}{3}-16$.
			And this implies $\beta_4^8\leq -19$, again a contradiction. Similarly, if $\alpha_2=\binom{11}{2}-2$ then $\beta_3^8\geq 0$ implies 
			$\alpha_3\geq \binom{11}{3}-22$ and this lead to $\beta_4^8\leq -4$, again a contradiction. Therefore, we may assume $\alpha_2\leq \binom{11}{2}-3$.
			
			From \eqref{cccon} it follows that we have to show that $\alpha_5-3\alpha_4\leq -404$. This is true for
			$\alpha_4\geq \binom{10}{4}+\binom{5}{3}+\binom{2}{2}=221$ so we may assume $\alpha_4\leq 220$.
			
			From \eqref{cccon} it follows that we have to show that $\alpha_5-3\alpha_4\leq -390$. This is true for
			$\alpha_4\geq \binom{10}{4}+\binom{4}{3}+\binom{2}{2}=215$ so we may assume $\alpha_4\leq 214$.
			
			From \eqref{cccon} it follows that we have to show that $\alpha_5-3\alpha_4\leq -383$. 
			This is true for $\alpha_4\geq \binom{10}{4}+\binom{3}{3}+\binom{2}{2}=212$ so we may assume $\alpha_4\leq 211$.
			
			From \eqref{cccon} it follows that we have to show that $\alpha_5-3\alpha_4\leq -380$. This is true for $\alpha_4\geq 211$
			so we may assume $\alpha_4\leq 210$.
			
			From \eqref{cccon} it follows that we have to show that $\alpha_5-3\alpha_4\leq -378$. This is true for
			$\alpha_4\geq \binom{9}{4}+\binom{8}{3}+\binom{6}{2}+\binom{1}{1}=198$ so we may assume $\alpha_4\leq 197$.
			
			From \eqref{cccon} it follows that we have to show that $\alpha_5-3\alpha_4\leq -363$. This is true for
			$\alpha_4\geq \binom{9}{4}+\binom{8}{3}+\binom{3}{2}+\binom{2}{1}=187$ so we may assume $\alpha_4\leq 186$.
			
			From \eqref{cccon} it follows that we have to show that $\alpha_5-3\alpha_4\leq -350$. This is true for
			$\alpha_4\geq \binom{9}{4}+\binom{7}{3}+\binom{6}{2}+\binom{1}{1}=177$ so we may assume $\alpha_4\leq 176$.
			
			From \eqref{cccon} it follows that we have to show that $\alpha_5-3\alpha_4\leq -338$. This is true, since
			$\alpha_4\geq \binom{9}{4}+\binom{7}{3}+\binom{5}{2}$.
			
\item $n=10$. From \eqref{condy} it follows that $\alpha_4\geq 140=\binom{9}{4}+\binom{5}{3}+\binom{3}{2}+\binom{1}{1}$.
      If $\alpha_4\geq \binom{9}{4}+\binom{8}{3}+\binom{6}{2}+\binom{1}{1}$ then $\alpha_5-3\alpha_4\leq f_5(10)$,
			hence we may assume $\alpha_4 \leq \binom{9}{4}+\binom{8}{3}+\binom{6}{2}=197$. From \eqref{ccondy} it follows that
			$$6\alpha_3-10\alpha_2 \leq \frac{6}{5}\cdot 197 + 84 + 4\cdot 10^2 - 46\cdot 10 = 260.4$$
			Hence, in order to prove that $\beta_5^7\leq \binom{7}{5}$ it is enough to show that
			$$\alpha_5-3\alpha_4 \leq \binom{7}{5}-261-15\cdot 10 + 21 = -369.$$
      This condition holds for $\alpha_4\geq \binom{9}{4}+\binom{8}{3}+\binom{4}{2}+\binom{2}{1}=190$ so we may assume $\alpha_4\leq 189$.
			
			From \eqref{ccondy} it follows that we have to show that $\alpha_5-3\alpha_4\leq -360$. This is true for
			$\alpha_4\geq \binom{9}{4}+\binom{8}{3}+\binom{3}{2}+\binom{1}{1}=186$ so we may assume $\alpha_4\leq 185$.
			
			From \eqref{ccondy} it follows that we have to show that $\alpha_5-3\alpha_4\leq -354$. This is true for
			$\alpha_4\geq \binom{9}{4}+\binom{8}{3}+\binom{2}{2}+\binom{1}{1}=184$ so we may assume $\alpha_4\leq 183$.
			
			From \eqref{ccondy} it follows that we have to show that $\alpha_5-3\alpha_4\leq -352$. This is true for $\alpha_4=183$
			so we may assume $\alpha_4\leq 182$ and we have to show that $\alpha_5-3\alpha_4\leq -351$. 
			
			If $\alpha_2=\binom{10}{2}$ then $\beta_3^8\geq 0$ implies $\alpha_3\geq \binom{10}{3}-4$. Therefore $\beta_4^8\leq -3$, a contradiction.
			Hence $\alpha_2\leq \binom{10}{2}-1$. From \eqref{cccon} it follows that we have to show $\alpha_5-3\alpha_4\leq -343$. This is true for
			$\alpha_4\geq \binom{9}{4}+\binom{7}{3}+\binom{5}{2}+\binom{1}{1}=172$ so we may assume $\alpha_4\leq 171$.
			
			From \eqref{cccon} it follows that we have to show that $\alpha_5-3\alpha_4\leq -330$. This is true for
			$\alpha_4\geq \binom{9}{4}+\binom{7}{3}+\binom{3}{2}=164$ so we may assume $\alpha_4\leq 163$.
			
			From \eqref{cccon} it follows that we have to show that $\alpha_5-3\alpha_4\leq -320$. This is true for
			$\alpha_4\geq \binom{9}{4}+\binom{6}{3}+\binom{5}{2}+\binom{1}{1}=157$ so we may assume $\alpha_4\leq 156$.
			
			From \eqref{cccon} it follows that we have to show that $\alpha_5-3\alpha_4\leq -312$. This is true for
      $\alpha_4\geq \binom{9}{4}+\binom{6}{3}+\binom{4}{2}+\binom{1}{1}=153$ so we may assume $\alpha_4\leq 152$.

      From \eqref{cccon} it follows that we have to show that $\alpha_5-3\alpha_4\leq -307$. This is true for
      $\alpha_4\geq \binom{9}{4}+\binom{6}{3}+\binom{3}{2}+\binom{1}{1}=150$ so we may assume $\alpha_4\leq 149$.
			
			From \eqref{cccon} it follows that we have to show that $\alpha_5-3\alpha_4\leq -303$. This is true for
      $\alpha_4\geq \binom{9}{4}+\binom{6}{3}+\binom{2}{2}+\binom{1}{1}=148$ so we may assume $\alpha_4\leq 147$.
			
			From \eqref{cccon} it follows that we have to show that $\alpha_5-3\alpha_4\leq -301$. Now, if $\alpha_2=\binom{10}{2}-1$ then
			from the fact that $\beta_3^8\geq 0$ it follows that $\alpha_3\geq \binom{10}{3}-10$ and moreover $\beta_4^8\leq -23$, a contradiction.
			Similarly, if $\alpha_2=\binom{10}{2}-2$ we get $\alpha_3\geq \binom{10}{3}-16$ and $\beta_4^8\leq -8$, again a contradiction. 
			Hence, $\alpha_2\leq \binom{10}{3}-3$ and from \eqref{cccon} it is enough to show that $\alpha_5-3\alpha_4\leq -301+16=-285$.
			This condition is satisfied, as $\alpha_4\geq \binom{9}{4}+\binom{5}{3}+\binom{3}{2}$.

\end{itemize}
Thus, the proof is complete.
\end{proof}

\begin{lema}\label{q6_8}
If $q=8$ then $\beta_6^{7} \leq \binom{n-2}{6}$.
\end{lema}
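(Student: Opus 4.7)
The strategy follows that of Lemmas \ref{q4_8} and \ref{q5_8}, shifted one index higher. First, expand
\begin{equation*}
\beta_6^7 = \alpha_6 - 2\alpha_5 + 3\alpha_4 - 4\alpha_3 + 5\alpha_2 - 6n + 7,
\end{equation*}
and, via \eqref{combi}, note that $\binom{n-2}{6} = \binom{n}{6} - 2\binom{n}{5} + 3\binom{n}{4} - 4\binom{n}{3} + 5\binom{n}{2} - 6n + 7$, so the desired inequality is equivalent to
\begin{equation*}
\alpha_6 - 2\alpha_5 + 3\alpha_4 - 4\alpha_3 + 5\alpha_2 \leq \binom{n}{6} - 2\binom{n}{5} + 3\binom{n}{4} - 4\binom{n}{3} + 5\binom{n}{2}.
\end{equation*}
The constraints $\beta_k^8 \geq 0$ for $k = 2,3,4,5$ yield the cascaded lower bounds
\begin{equation*}
\alpha_2 \geq 7(n-4),\;\alpha_3 \geq 6\alpha_2 - 21n + 56,\;\alpha_4 \geq 5\alpha_3 - 15\alpha_2 + 35n - 70,\;\alpha_5 \geq 4\alpha_4 - 10\alpha_3 + 20\alpha_2 - 35n + 56,
\end{equation*}
and the telescoping $\beta_6^7 = \beta_6^6 - \beta_5^6 \leq \beta_6^6 = \alpha_6 - \beta_5^5 \leq \alpha_6$ (both intermediate $\beta$'s being nonnegative since $q = 8$) shows that we may assume $\alpha_6 \geq \binom{n-2}{6}+1$; Lemma \ref{cord}(1) then gives Kruskal-Katona shadow lower bounds on each $\alpha_k$.

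The core of the proof is a reduction, analogous to \eqref{klam} in Lemma \ref{q5_8}, that bounds the ``middle'' combination $3\alpha_4 - 4\alpha_3 + 5\alpha_2$ from above by its maximum $3\binom{n}{4} - 4\binom{n}{3} + 5\binom{n}{2}$; because the sign pattern here is less clean than in \eqref{klam}, one combines Kruskal-Katona on the Macaulay decomposition of $\alpha_3$ with the bound $\alpha_3 \geq 6\alpha_2 - 21n + 56$ (which forces $\alpha_3$ to stay close to $\binom{n}{3}$ whenever $\alpha_2$ is close to $\binom{n}{2}$). This collapses the problem to the ``leading'' inequality
\begin{equation*}
\alpha_6 - 2\alpha_5 \leq \binom{n}{6} - 2\binom{n}{5}.
\end{equation*}
Writing $\alpha_5 = \binom{n_5}{5} + \binom{n_4}{4} + \binom{n_3}{3} + \binom{n_2}{2} + \binom{n_1}{1}$ in Macaulay form and applying Lemma \ref{cord}(2), with the auxiliary functions $f_k(x) := \binom{x}{k} - 2\binom{x}{k-1}$ it suffices to prove
\begin{equation*}
f_6(n_5) + f_5(n_4) + f_4(n_3) + f_3(n_2) + f_2(n_1) \leq f_6(n).
\end{equation*}
Using the Pascal identity $f_k(x) - f_k(x-1) = f_{k-1}(x-1)$, a direct computation yields $f_6(n) - \bigl(f_6(n-1) + f_5(n-2) + f_4(n-3) + f_3(n-4) + f_2(n-5)\bigr) = n - 7$, so for $n$ beyond some threshold $N_0$ this closes the argument via monotonicity of each $f_k$ on an appropriate interval.

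The main obstacle will be the finite range of small $n$, say $10 \leq n \leq N_0 - 1$. For each such $n$, following the template of Lemmas \ref{q4_8} and \ref{q5_8}, one tabulates the $f_k$'s and performs an iterative case analysis: successively decrease the upper bound on $\alpha_5$ (and on $\alpha_4$) through its Macaulay decomposition, each time invoking $\beta_k^8 \geq 0$ to sharpen the lower bound on another $\alpha_j$, until either a contradiction or the desired inequality is reached. The tightest corner cases are expected to require auxiliary use of $\beta_6^8 \geq 0$ or $\beta_7^8 \geq 0$, as in the $n = 11, 12$ endgames of Lemma \ref{q5_8}. Though mechanical, this bookkeeping will dominate the length of the proof.
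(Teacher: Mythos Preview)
Your proposal is correct and mirrors the paper's proof almost exactly: the same expansion, the same telescoping reduction to $\alpha_6\geq\binom{n-2}{6}+1$, the same middle bound collapsing the problem to $\alpha_6-2\alpha_5\leq f_6(n)$ with $f_k(x)=\binom{x}{k}-2\binom{x}{k-1}$, the same $n-7$ identity for large $n$ (the paper's threshold is $N_0=16$), and the same iterative case analysis for $10\leq n\leq 15$. Two minor deviations worth noting: the middle reduction is actually cleaner than you suggest, since the $+5\alpha_2$ term is handled by the trivial $\alpha_2\leq\binom{n}{2}$ and the paper only proves $3\alpha_4-4\alpha_3\leq 3\binom{n}{4}-4\binom{n}{3}$ via Kruskal--Katona (using $n_3\geq n-2$); and in the small-$n$ endgame the key auxiliary inequality is $3\alpha_4-4\alpha_3+5\alpha_2\leq\alpha_5+\alpha_3+14$, obtained by adding $\beta_4^8\geq 0$ and $\beta_5^8\geq 0$, rather than any appeal to $\beta_6^8$ or $\beta_7^8$.
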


\begin{proof}
Since $\beta_2^8\geq 0$, $\beta_3^8\geq 0$, $\beta_5^8\geq 0$ and $\beta_5^8\geq 0$ it follows that 
\begin{equation}\label{condyy}
\begin{split}
& \alpha_2 \geq 7(n-4),\;\alpha_3 \geq 6\alpha_2-21n+56 \geq 21n-112 \\
& \alpha_4 \geq 5\alpha_3 - 15\alpha_2 + 35n - 70 \geq 35n- 210 \\
& \alpha_5 \geq 4\alpha_4 - 10\alpha_3 + 20\alpha_2 - 35n + 56 \geq  35n - 224
\end{split}
\end{equation}
From \eqref{condyy} it follows
							that \begin{equation}\label{sisi}
							3\alpha_4-4\alpha_3+5\alpha_2 \leq \alpha_5+\alpha_3+14.
							\end{equation}
Since $\beta_6^7 = \beta_6^6 - \beta_5^6 \leq \beta_6^6 \leq \alpha_6$, we can safely assume that $\alpha_6\geq \binom{n-2}{6}+1$, otherwise
there is nothing to prove. In particular, it follows that $\alpha_k\geq \binom{n-2}{k}+\binom{5}{k}$ for all $2\leq k\leq 5$.
We claim that 
\begin{equation}\label{kukuku}
3\alpha_4-4\alpha_3 \leq 3\binom{n}{4}-4\binom{n}{3}.
\end{equation}
If $\alpha_3=\binom{n}{3}$ then there is nothing to prove. Assume $\alpha_3=\binom{n_3}{3}+\binom{n_2}{2}+\binom{n_1}{1}$,
where $n_3>n_2>n_1\geq 0$ and $n_3\geq n-2$. Since $n\geq 10$, it is easy to check that
\begin{align*}
& 3\alpha_4-4\alpha_3\leq 3\binom{n_3}{4}+3\binom{n_2}{3}+3\binom{n_1}{2}-4\binom{n_3}{3}-4\binom{n_2}{2}-4\binom{n_1}{1} \leq \\
& \leq 3\binom{n-1}{4}+3\binom{n-2}{3}+3\binom{n-3}{2}-4\binom{n-1}{3}-4\binom{n-2}{2}-4\binom{n-3}{1} = \\
& = 3\binom{n}{4}-4\binom{n}{3} + 13 - 3n < 3\binom{n}{4}-4\binom{n}{3},
\end{align*}
as required. From \eqref{kukuku} it follows that, in order to prove that $\beta_6^{7} \leq \binom{n-2}{6}$, it
suffice to show that 
$$\alpha_6-2\alpha_5\leq \binom{n}{6}-2\binom{n}{5}.$$
If $\alpha_5=\binom{n}{5}$ then there is nothing to prove, hence we may assume
$$\alpha_5=\binom{n_5}{5}+\binom{n_4}{4}+\binom{n_3}{3}+\binom{n_2}{2}+\binom{n_1}{1},$$
where $n_5>n_4>n_3>n_2>n_1\geq 0$ and $n_5\geq n-2$. Also, if $n_5=n-2$ then $n_4\geq 5$.

We consider the functions $f_k(x)=\binom{x}{k}-2\binom{x}{k-1}$, for $2\leq k\leq 6$. We have:

 \begin{center}
 \begin{table}[tbh]
 \begin{tabular}{|c|c|c|c|c|c|c|c|c|c|c|c|c|c|c|c|c|}
 \hline
 $x$      & 1  & 2  & 3  & 4  & 5  & 6  & 7   & 8    & 9   & 10  & 11  & 12  & 13  & 14   & 15   & 16    \\ \hline
 $f_6(x)$ & 0  & 0  & 0  & 0  & -2 & -11& -35 & -84  & -168& -294& -462& -660& -858& -1001& -1001& -728  \\ \hline
 $f_5(x)$ & 0  & 0  & 0  & -2 & -9 & -24& -49 & -84  & -126& -168& -198& -198& -143& 0    & 273  &    \\ \hline
 $f_4(x)$ & 0  & 0  & -2 & -7 & -15& -25& -35 & -42  & -42 & -30 & 0   & 55  & 143 & 273  &   &       \\ \hline
 $f_3(x)$ & 0  & -2 & -5 & -8 & -10& -10& -7  & 0    & 12  & 30  & 55  & 88  & 130 &   &      &       \\ \hline
 $f_2(x)$ & -2 & -3 & -3 & -2 & 0  & 3  & 7   & 12   &18   & 25  & 33  & 42  &   & & &  \\ \hline
 \end{tabular}
 \end{table}
\end{center}

If $n\geq 16$ then, from the above table and the conditions on $\alpha_5$, we have
$$f_6(n)-(\alpha_6-2\alpha_5) \geq f_6(n)-(f_6(n-1)+f_5(n-2)+f_4(n-3)+f_3(n-4)+f_2(n-5))=n-7,$$ 
and we are done. We have to consider the following cases:
\begin{itemize}
\item $n=15$. If $\alpha_5\geq \binom{13}{5}+\binom{9}{4}+\binom{5}{3}+\binom{2}{2}=1424$ then
              $$\alpha_6-2\alpha_5\leq f_6(13)+f_5(9)+f_4(5)+f_3(2) = -1001 =f_6(15),$$
							and we are done. Hence, we may assume $\alpha_5\leq 1423$. 
							Since $\alpha_5\leq 1423$ and $\alpha_3\leq \binom{15}{3}$, from \eqref{sisi} it follows that 
							$$3\alpha_4-4\alpha_3+5\alpha_2 \leq 1892.$$
							Thus, in order to show that $$\beta_6^7 = \alpha_6-2\alpha_5+ (3\alpha_4-4\alpha_3+5\alpha_2) - 6n + 7 \leq \binom{13}{6}=1716,$$ it suffice to show
							$\alpha_6-2\alpha_5 \leq -44$, which is true, as $\alpha_5\geq \binom{13}{5}$ and $f_6(13)=-858$.
\item $n=14$. As in the case $n=15$, if $\alpha_5\geq 1424$ then we are done, so we may assume $\alpha_5\leq 1423$.
              Since $\alpha_3\leq \binom{14}{3}$, from \eqref{sisi} it follows that
							$$3\alpha_4-4\alpha_3+5\alpha_2 \leq 1801.$$
							Thus, in order to show that $\beta_6^7\leq \binom{12}{6}=924$, it is enough to show
							that $\alpha_6-2\alpha_5 \leq -800$, which is true, for $\alpha_5\geq \binom{12}{5}+\binom{10}{4}+\binom{5}{3}=1012$.
							Now, assume $\alpha_5\leq 1011$. Using \eqref{sisi} again, it follows that it is enough to show $\alpha_6-2\alpha_5 \leq -388$,
							which is true, since $\alpha_5\geq \binom{12}{6}$ and $f_6(12)=-660$.
\item $n=13$. If $\alpha_5\geq \binom{12}{5}+\binom{10}{4}+\binom{6}{3}+\binom{3}{2}=1025$ then $\alpha_6-2\alpha_5\leq -880=f_6(13)$ and
              we are done. Hence, we may assume $\alpha_5\leq 1024$. Since $\alpha_3\leq \binom{13}{3}$, from \eqref{sisi} it follows that
							$$3\alpha_4-4\alpha_3+5\alpha_2 \leq 1324.$$
							Thus, in order to show that $\beta_6^7\leq \binom{11}{6}=462$, it suffices to show that
							$\alpha_6-2\alpha_5 \leq -791$. This condition holds form $\alpha_5\geq \binom{12}{5}+\binom{9}{4}+\binom{3}{3}+\binom{2}{2}+\binom{1}{1}=921$,
							hence we may assume $\alpha_5\leq 920$. From \eqref{sisi} it follows that it is enough to show $\alpha_6-2\alpha_5\leq -687$, a condition
							which is hold for $\alpha_5\geq \binom{12}{5}+\binom{6}{4}+\binom{3}{3}+\binom{2}{2}=809$. Assume $\alpha_5\leq 808$.
							From \eqref{sisi} it follows that it is enough to show $\alpha_6-2\alpha_5\leq -575$, a condition which is satisfied for 
							$\alpha_5\geq \binom{11}{5}+\binom{8}{4}+\binom{6}{3}+\binom{2}{2}+\binom{1}{1}=554$. If $\alpha_5\leq 553$, from \eqref{sisi}
							it follows that it is enough to show that $\alpha_6-2\alpha_5\leq -320$, which is true as $\alpha_5\geq \binom{11}{5}$.							
\item $n=12$. If $\alpha_5\geq \binom{11}{5}+\binom{10}{4}+\binom{6}{3}+\binom{3}{2}=695$ then $\alpha_6-2\alpha_5\leq -660=f_6(12)$ and we are done.
              Assume $\alpha_5\leq 694$. Since $\alpha_3\leq \binom{12}{3}$, from \eqref{sisi} it follows that  
							$$3\alpha_4-4\alpha_3+5\alpha_2 \leq 928.$$
							Thus, in order to show that $\beta_6^7\leq \binom{10}{6}=210$, it suffices to show that
							$\alpha_6-2\alpha_5\leq -653$. This holds for $\alpha_5\geq \binom{11}{5}+\binom{10}{4}+\binom{5}{3}+\binom{3}{2}+\binom{2}{1}=687$. 
							Assume $\alpha_5\leq 686$. From \eqref{sisi} it follows that it is enough to prove $\alpha_6-2\alpha_5\leq -645$.
							This holds for $\alpha_5\geq \binom{11}{5}+\binom{10}{4}+\binom{4}{3}+\binom{3}{2}+\binom{2}{1}=681$.
							Assume $\alpha_5\leq 680$. We have to show that $\alpha_6-2\alpha_5\leq -639$, which is true for 
							$\alpha_5\geq \binom{11}{5}+\binom{10}{4}+\binom{4}{3}+\binom{2}{2}=678$. Assume $\alpha_5\leq 677$.
							We have to show that $\alpha_6-2\alpha_5\leq -636$, which is true for $\alpha_5\geq \binom{11}{5}+\binom{10}{4}+
							\binom{3}{3}+\binom{2}{2}+\binom{1}{1}=675$. Assume $\alpha_5\leq 674$. We have to show that $\alpha_6-2\alpha_5\leq -633$,
							which is true for $\alpha_5=674$. Assume $\alpha_5\leq 673$. We have to show that $\alpha_6-2\alpha_5\leq -632$,
							which is true for $\alpha_5=673$. 
							
							Assume $\alpha_5\leq 672$. We have to show that $\alpha_6-2\alpha_5\leq -631$.
							Assume $\alpha_5=672$. If $\alpha_3 < \binom{12}{3}$ then, using \eqref{sisi}, it follows that it is enough to
							show $\alpha_6-2\alpha_5\leq -630$ which is true in this case. Now, assume $\alpha_3=\binom{12}{3}$. Since $\alpha_5=672$
							and $\beta_5^8\geq 0$ it follows that $450\leq \alpha_4\leq 479$. Since $\alpha_4\leq 479$ and $\alpha_6-2\alpha_5\leq -630$
							it follows that $\beta_6^7\leq 192 < 210 = \binom{10}{6}$ and thus we are done. 
							
							Now, assume $\alpha_5\leq 671$. From \eqref{sisi} it is enough to show that $\alpha_6-2\alpha_5\leq -630$, which is true
							for $\alpha_5\geq \binom{11}{5}+\binom{9}{4}+ \binom{7}{3} + \binom{3}{2}+\binom{1}{1}=627$. So we may assume $\alpha_5\leq 626$.
							From \eqref{sisi} it is enough to show that $\alpha_6-2\alpha_5\leq -585$, which is true for 
							$\alpha_5\geq \binom{11}{5}+\binom{8}{4}+\binom{7}{3}+\binom{2}{2}+\binom{1}{1}=569$. So we may assume $\alpha_5\leq 568$.
							From \eqref{sisi} it is enough to show that $\alpha_6-2\alpha_5\leq -527$, which is true for
							$\alpha_5\geq \binom{11}{5}+\binom{7}{4}+\binom{5}{3}+\binom{2}{2}=508$. So me may assume $\alpha_5\leq 507$.
							From \eqref{sisi} it is enough to show that $\alpha_6-2\alpha_5\leq -466$, which is true for
							$\alpha_5\geq \binom{11}{5}+\binom{4}{4}+\binom{3}{3}=464$. So me may assume $\alpha_5\leq 463$.
							From \eqref{sisi} it is enough to show that $\alpha_6-2\alpha_5\leq -422$, which is true for
							$\alpha_5\geq \binom{10}{5}+\binom{9}{4}+\binom{3}{3} = 379$. So me may assume $\alpha_5\leq 378$.
							From \eqref{sisi} it is enough to show that $\alpha_6-2\alpha_5\leq -357$, which is true for
							$\alpha_5\geq \binom{10}{5}+\binom{7}{4}+\binom{4}{3}+\binom{2}{2}+\binom{1}{1}=293$. So me may assume $\alpha_5\leq 292$.
							From \eqref{sisi} it is enough to show that $\alpha_6-2\alpha_5\leq -271$, which is true, as $\alpha_5\geq \binom{10}{5}$.
\item $n=11$. If $\alpha_5\geq \binom{10}{5}+\binom{9}{4}+\binom{7}{3}+\binom{3}{2}+\binom{1}{1}=417$ then
              $\alpha_6-2\alpha_5\leq -462=f_6(11)$, so we may assume $\alpha_5\leq 416$. From \eqref{sisi} it follows that
							$$3\alpha_4-4\alpha_3+5\alpha_2 \leq 416+\binom{11}{3}+14=595.$$
							Thus, in order to show that $\beta_6^7\leq \binom{9}{6}=84$, it suffices to show that
							$\alpha_6-2\alpha_5\leq -452$. This is true for $\alpha_5\geq \binom{10}{5}+\binom{9}{4}+\binom{6}{3}
							+\binom{3}{2}+\binom{1}{1}=402$, so we may assume $\alpha_5\leq 401$. From \eqref{sisi} it follows that
							it is enough to show that $\alpha_6-2\alpha_5\leq -437$. This is true for 
							$\alpha_5\geq \binom{10}{5}+\binom{9}{4}+\binom{5}{3}+\binom{2}{2}=389$, so we may assume $\alpha_5\leq 388$.
							From \eqref{sisi} it follows that
							it is enough to show that $\alpha_6-2\alpha_5\leq -425$. This is true for 
							$\alpha_5\geq \binom{10}{5}+\binom{9}{4}+\binom{3}{3}+\binom{2}{2}+\binom{1}{1}=381$,
							so we may assume $\alpha_5\leq 380$. From \eqref{sisi} it follows that
							it is enough to show that $\alpha_6-2\alpha_5\leq -417$. This is true for 
							$\alpha_5\geq \binom{10}{5}+\binom{8}{4}+\binom{7}{3}+\binom{2}{2}+\binom{1}{1}=358$,
							so we may assume $\alpha_5\leq 357$. From \eqref{sisi} it follows that
							it is enough to show that $\alpha_6-2\alpha_5\leq -394$. This is true for 
							$\alpha_5\geq \binom{10}{5}+\binom{8}{4}+\binom{5}{3}+\binom{2}{2}=333$,
							so we may assume $\alpha_5\leq 332$. From \eqref{sisi} it follows that
							it is enough to show that $\alpha_6-2\alpha_5\leq -369$. This is true for
							$\alpha_5\geq \binom{10}{5}+\binom{7}{4}+\binom{6}{3}+\binom{2}{2}=308$,
							so we may assume $\alpha_5\geq 307$. From \eqref{sisi} it follows that
							it is enough to show that $\alpha_6-2\alpha_5\leq -344$. This is true for
							$\alpha_5\geq \binom{10}{5}+\binom{7}{4}+\binom{3}{3}=288$, so we may
							assume $\alpha_5\geq 287$. From \eqref{sisi} it follows that
							it is enough to show that $\alpha_6-2\alpha_5\leq -324$. This is true for
							$\alpha_5\geq \binom{10}{5}+\binom{6}{4}+\binom{3}{3}+\binom{2}{2}+\binom{1}{1}=270$,
							so we may assume $\alpha_5\geq 269$. From \eqref{sisi} it follows that
							it is enough to show that $\alpha_6-2\alpha_5\leq -306$. This is true for
							$\alpha_5\geq \binom{10}{5}+\binom{5}{4}+\binom{3}{3}+\binom{2}{2}=259$,
							so we may assume $\alpha_5\geq 258$. From \eqref{sisi} it follows that
							it is enough to show that $\alpha_6-2\alpha_5\leq -295$. This is true for
							$\alpha_5\geq \binom{10}{5}+\binom{4}{4}=253$, so we may assume $\alpha_5\leq 252$. 
							From \eqref{sisi} it follows that
							it is enough to show that $\alpha_6-2\alpha_5\leq -289$. This is true for
							$\alpha_5\geq \binom{9}{5}+\binom{8}{4}+\binom{7}{3}+\binom{2}{2}=232$,
							so we may assume $\alpha_5\leq 231$. From \eqref{sisi} it follows that
							it is enough to show that $\alpha_6-2\alpha_5\leq -268$. This is true for
							$\alpha_5\geq \binom{9}{5}+\binom{8}{4}+\binom{4}{3}+\binom{3}{2}+\binom{1}{1}=204$,
							so we may assume $\alpha_5\leq 203$. From \eqref{sisi} it follows that
							it is enough to show that $\alpha_6-2\alpha_5\leq -240$. This is true for
							$\alpha_5\geq \binom{9}{5}+\binom{7}{4}+\binom{5}{3}+\binom{3}{2}+\binom{2}{1}=176$,
							so me may assume $\alpha_5\leq 175$. From \eqref{sisi} it follows that
							it is enough to show that $\alpha_6-2\alpha_5\leq -212$. This is true for
							$\alpha_5\geq \binom{9}{5}+\binom{6}{4}+\binom{5}{3}+\binom{3}{2}=154$,
							so we may assume $\alpha_5\leq 153$. From \eqref{sisi} it follows that
							it is enough to show that $\alpha_6-2\alpha_5\leq -190$.  This is true for
							$\alpha_5\geq \binom{9}{5}+\binom{5}{4}+\binom{4}{3}+\binom{3}{2}+\binom{1}{1}=139$,
							so we may assume $\alpha_5\leq 138$. From \eqref{sisi} it follows that
							it is enough to show that $\alpha_6-2\alpha_5\leq -175$. This is true, since
							$\alpha_5\geq \binom{9}{5}+\binom{5}{4}$.
\item $n=10$. If $\alpha_5\geq \binom{9}{5}+\binom{8}{4}+\binom{7}{3}+\binom{3}{2}+\binom{1}{1}=235$ then
              $\alpha_6-2\alpha_5\leq -294=f_6(10)$, so we may assume $\alpha_5\leq 234$. From \eqref{sisi} it follows that
							$$3\alpha_4-4\alpha_3+5\alpha_2 \leq 234+\binom{10}{3}+14=368.$$
							Thus, in order to show that $\beta_6^7\leq \binom{8}{6}=28$, it suffices to show that
							$\alpha_6-2\alpha_5\leq -287$. This is true for $\alpha_5\geq \binom{9}{5}+\binom{8}{4}+\binom{7}{3}=231$,
							so we may assume $\alpha_5\leq 230$. From \eqref{sisi} it follows that
							it is enough to show that $\alpha_6-2\alpha_5\leq -283$. This is true for
							$\alpha_5\geq \binom{9}{5}+\binom{8}{4}+\binom{6}{3}+\binom{3}{2}+\binom{1}{1}=220$,
							so we may assume $\alpha_5\leq 219$. From \eqref{sisi} it follows that
							it is enough to show that $\alpha_6-2\alpha_5\leq -272$. This is true for
							$\alpha_5\geq \binom{9}{5}+\binom{8}{4}+\binom{5}{3}+\binom{3}{2}=209$,
							so we may assume $\alpha_5\leq 208$.  From \eqref{sisi} it follows that
							it is enough to show that $\alpha_6-2\alpha_5\leq -261$. This is true for
							$\alpha_5\geq \binom{9}{5}+\binom{8}{4}+\binom{4}{3}+\binom{2}{2}=201$,
							so we may assume $\alpha_5\leq 200$. From \eqref{sisi} it follows that
							it is enough to show that $\alpha_6-2\alpha_5\leq -253$. This is true for
							$\alpha_5\geq \binom{9}{5}+\binom{8}{4}+\binom{4}{4}=197$, 
							so we may assume $\alpha_5\leq 196$. From \eqref{sisi} it follows that
							it is enough to show that $\alpha_6-2\alpha_5\leq -249$. This is true for
							$\alpha_5\geq \binom{9}{5}+\binom{7}{4}+\binom{6}{3}+\binom{3}{2}+\binom{1}{1}=185$,
							so we may assume $\alpha_5\leq 184$. From \eqref{sisi} it follows that
							it is enough to show that $\alpha_6-2\alpha_5\leq -237$. This is true for
							$\alpha_5\geq \binom{9}{5}+\binom{7}{4}+\binom{5}{3}+\binom{3}{2}=174$,
							so we may assume $\alpha_5\leq 173$. From \eqref{sisi} it follows that
							it is enough to show that $\alpha_6-2\alpha_5\leq -226$. This is true for
							$\alpha_5\geq \binom{9}{5}+\binom{7}{4}+\binom{4}{3}+\binom{2}{2}+\binom{1}{1}= 167$,
							so we may assume $\alpha_5\leq 166$. From \eqref{sisi} it follows that
							it is enough to show that $\alpha_6-2\alpha_5\leq -219$. This is true for
							$\alpha_5\geq \binom{9}{5}+\binom{7}{4}+\binom{3}{3}=162$,
							so we may assume $\alpha_5\leq 161$. From \eqref{sisi} it follows that
							it is enough to show that $\alpha_6-2\alpha_5\leq -214$. This is true for
							$\alpha_5\geq \binom{9}{5}+\binom{6}{4}+\binom{5}{3}+\binom{3}{2}+\binom{1}{1}=155$,
							so we may assume $\alpha_5\leq 154$. From \eqref{sisi} it follows that
							it is enough to show that $\alpha_6-2\alpha_5\leq -207$. This is true for
							$\alpha_5\geq \binom{9}{5}+\binom{6}{4}+\binom{4}{3}+\binom{3}{2}+\binom{2}{1}=150$,
							so we may assume $\alpha_5\leq 149$. From \eqref{sisi} it follows that
							it is enough to show that $\alpha_6-2\alpha_5\leq -202$. This is true for
							$\alpha_5\geq \binom{9}{5}+\binom{6}{4}+\binom{4}{3}+\binom{2}{2}+\binom{1}{1}=147$,
							so we may assume $\alpha_5\leq 146$. From \eqref{sisi} it follows that
							it is enough to show that $\alpha_6-2\alpha_5\leq -199$. This is true for
							$\alpha_5\geq \binom{9}{5}+\binom{6}{4}+\binom{4}{3}=145$, so we may assume $\alpha_5\geq 144$. From \eqref{sisi} 
							it follows that it is enough to show that $\alpha_6-2\alpha_5\leq -197$. 
							This is true for $\alpha_5=144$, so we may assume $\alpha_5\leq 143$ and it enough to show 
							$\alpha_6-2\alpha_5\leq -196$. Again, this relation holds for $\alpha_5=143$ so we may asume
							$\alpha_5\leq 142$. 
							
							If $\alpha_3\geq 113$ then $\alpha_2=\binom{10}{2}$. Since $\beta_3^8\geq 0$ it follows that $\alpha_3\geq 116$.
							Now, since $\beta_4^8\geq 0$ it follows that $\alpha_4\geq \binom{10}{4}-25$. But, as $\alpha_5\leq 142$ it follows
							that $\beta_5^8\leq -44<0$, a contradiction.
							
						  On the other hand, if $\alpha_3\leq 112$ then from \eqref{sisi} it follows that it is enough to show that
							$\alpha_6-2\alpha_5 \leq -188$, which is true for 
							$$\alpha_5 \geq \binom{9}{5}+\binom{5}{4}+\binom{4}{3}+\binom{2}{2}+\binom{1}{1}=137.$$
							So, we may assume $\alpha_5\leq 136$. From the fact that $\alpha_3\leq 112$ and \eqref{sisi} it follows that
							it is enough to show $\alpha_6-2\alpha_5\leq -180$, which is true for $\alpha_5\geq \binom{9}{5}+\binom{5}{4}+\binom{3}{3}+\binom{2}{2}=133$.
							So, we may assume $\alpha_5\leq 132$ and we have to prove that $\alpha_6-2\alpha_5\leq -176$, which is 
							true for $\alpha_5\geq \binom{9}{5}+\binom{4}{4}+\binom{3}{3}+\binom{2}{2}+\binom{1}{1}=130$.
							So, we may assume $\alpha_5\leq 129$ and we have to prove that $\alpha_6-2\alpha_5\leq -173$, which is true for $\alpha_5=129$.
							If $\alpha_5=128$ then we have to prove that $\alpha_6-2\alpha_5\leq -172$, which is true. 
							
							Now, assume $\alpha_5\leq 127$. If $\alpha_3\geq 106$ then $\alpha_2\geq \binom{10}{2}-1$. Since $\beta_3^8\geq 0$ it follows
							that $\alpha_3\geq 110$. As $\beta_4^8\geq 0$ it follow that $\alpha_4\geq \binom{10}{4}-40$. Since $\alpha_5\leq 127$ it follows
							$\beta_5^8<0$, a contradiction. So, we may assume $\alpha_3\leq 105$ instead of $\alpha_3\leq 112$ and it suffices to show
							that $\alpha_6-2\alpha_5\leq -164$, which is true for
							$\alpha_5\geq \binom{8}{5}+\binom{7}{4}+\binom{6}{3}+\binom{3}{2}+\binom{2}{1}=116.$
							We assume therefore $\alpha_5\leq 115$ and we have to prove that $\alpha_6-2\alpha_5\leq -152$, which is true for
							$\alpha_5\geq \binom{8}{5}+\binom{7}{4}+\binom{5}{3}+\binom{2}{2}+\binom{1}{1}=103.$
							We assume $\alpha_5\leq 102$ and we have to prove that $\alpha_6-2\alpha_5\leq -139$, which is true for
							$\alpha_5\geq \binom{8}{5}+\binom{7}{4}+\binom{3}{3}+\binom{2}{2}+\binom{1}{1}=94.$
							We assume $\alpha_5\leq 93$ and we have to prove that $\alpha_6-2\alpha_5\leq -130$, which is true for
							$\alpha_5\geq \binom{8}{5}+\binom{6}{4}+\binom{5}{3}+\binom{3}{2}+\binom{1}{1}=85.$
							We assume $\alpha_5\leq 84$ and we have to prove that $\alpha_6-2\alpha_5\leq -121$, which is true for
							$\alpha_5\geq \binom{8}{5}+\binom{6}{4}+\binom{4}{3}+\binom{3}{2}+\binom{1}{1}=79.$
							We assume $\alpha_5\leq 78$ and we have to prove that $\alpha_6-2\alpha_5\leq -115$, which is true for
							$\alpha_5\geq \binom{8}{5}+\binom{6}{4}+\binom{4}{3}=75.$
							We assume $\alpha_5\leq 74$ and we have to prove that $\alpha_6-2\alpha_5\leq -111$,
							which is true for $\alpha_5\geq \binom{8}{5}+\binom{6}{4}+\binom{3}{3}+\binom{2}{2}=73.$
							We assume $\alpha_5\leq 72$ and we have to prove that $\alpha_6-2\alpha_5\leq -109$, which is true for $\alpha_5=72$.
							We assume $\alpha_5\leq 71$ and we have to prove that $\alpha_6-2\alpha_5\leq -108$, which is true
							for $\alpha_5\geq \binom{8}{5}+\binom{5}{4}+\binom{4}{3}+\binom{3}{2}+\binom{2}{1}=70.$
							We assume $\alpha_5\leq 69$ and we have to prove that $\alpha_6-2\alpha_5\leq -106$, which is true
							for $\alpha_5 = 69$. We assume $\alpha_5\leq 68$ and we have to prove that $\alpha_6-2\alpha_5\leq -105$.							
							This is true for $\alpha_5=68$ so we may assume $\alpha_5\leq 67$. Also, we have to prove $\alpha_6-2\alpha_5\leq -104$.
							
							Again, this is true for $\alpha_5=67$. Assume $\alpha_5\leq 66$. We have to prove that $\alpha_6-2\alpha_5\leq -103$.
							If $\alpha_3\geq 100$ then $\alpha_2\geq \binom{10}{2}-2$. Since $\beta_3^8\geq 0$ it follows that $\alpha_3\geq 104$.
							Since $\beta_4^8\geq 0$ it follows that $\alpha_4\geq \binom{10}{4}-55$. It is easy to see that $\beta_5^8<0$ a contradiction.
							On the other hand, if $\alpha_3\leq 99$, from \eqref{sisi} it follows that it is enough to prove that 
							$\alpha_6-2\alpha_5\leq -97$, which is true for $\alpha_5\geq \binom{8}{5}+\binom{5}{4}+\binom{3}{3}+\binom{2}{2}=63$.
							Hece, we may assume $\alpha_5\leq 62$ and it is enough to prove that $\alpha_6-2\alpha_5\leq -93$, which is true
							since $\alpha_5\geq \binom{8}{5}+\binom{5}{4}$.							
							
\end{itemize}
Hence, the proof is complete.
\end{proof}

\begin{lema}\label{q7_8}
If $q=8$ then $\beta_7^{7} \leq \binom{n-1}{7}$.
\end{lema}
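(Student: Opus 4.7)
The plan follows the template established in the proofs of Lemmas~\ref{q5_8} and~\ref{q6_8}.

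First, iterating the identity $\beta_k^k = \alpha_k - \beta_{k-1}^{k-1}$ and using $\beta_6^6 \geq 0$ (which holds as $\hdepth(S/I) = 8 \geq 6$), we obtain $\beta_7^7 = \alpha_7 - \beta_6^6 \leq \alpha_7$. Hence we may assume $\alpha_7 \geq \binom{n-1}{7} + 1$, otherwise the lemma is immediate. Applying Lemma~\ref{cord} downward then yields lower bounds on $\alpha_k$ for $2 \leq k \leq 6$ (concretely, $\alpha_k \geq \binom{n-1}{k} + \binom{6}{k}$ or stronger). Combined with the chain of linear-in-$n$ lower bounds coming from $\beta_k^8 \geq 0$ for $2 \leq k \leq 7$, analogous to \eqref{condy} and \eqref{condyy} but extended with one more inequality for $\alpha_7$, we work in a tightly constrained regime.

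Writing $\beta_7^7 = \alpha_7 - \alpha_6 + \alpha_5 - \alpha_4 + \alpha_3 - \alpha_2 + n - 1$ and using the identity
$$\binom{n-1}{7} = \binom{n}{7} - \binom{n}{6} + \binom{n}{5} - \binom{n}{4} + \binom{n}{3} - \binom{n}{2} + n - 1,$$
the plan is to first establish the ``inner'' inequalities
$$\alpha_5 - \alpha_4 \leq \binom{n}{5} - \binom{n}{4}, \qquad \alpha_3 - \alpha_2 \leq \binom{n}{3} - \binom{n}{2},$$
analogously to \eqref{klam} and \eqref{kukuku}, by applying Lemma~\ref{cord} to the Kruskal-Katona decompositions of $\alpha_4$ and $\alpha_2$ respectively. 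In each case the same telescoping computation as in the proofs of \eqref{klam} and \eqref{kukuku} gives a positive residual of the form $n - c$ for a small constant $c$, so both inequalities hold for all $n \geq 10$. This reduces the proof to the single ``outer'' inequality
$$\alpha_7 - \alpha_6 \leq \binom{n}{7} - \binom{n}{6}.$$

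For this outer inequality, write $\alpha_6 = \binom{n_6}{6} + \binom{n_5}{5} + \cdots + \binom{n_1}{1}$ in Kruskal-Katona form (so $n_6 \geq n - 1$ by Step~1), and use Lemma~\ref{cord} to get $\alpha_7 \leq \binom{n_6}{7} + \binom{n_5}{6} + \cdots + \binom{n_1}{2}$. Setting $f_k(x) := \binom{x}{k+1} - \binom{x}{k}$, it suffices to show $\sum_{j=1}^6 f_j(n_j) \leq f_6(n)$. For $n$ above some moderate threshold, this is automatic via the telescoping identity $f_6(n) - \sum_{j=1}^6 f_j(n-7+j) = n - 7$. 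The main obstacle is the finite case analysis for $n$ in the small range, which is complicated by two features: the leading coefficient of the reduced inequality is $1$ (rather than $2$ or $3$ as in the previous lemmas), so $f_6$ changes sign at $x = 13$, making the target bound most restrictive for $n \leq 13$; and each $f_k$ is itself non-monotone on the relevant integer range. For each such small $n$, the plan is to emulate the iterative refinement exhibited in the proof of Lemma~\ref{q6_8} for $n \in \{10, 11, 12\}$: at each step one assumes a tighter upper bound on $\alpha_7$, derives constraints on its Kruskal-Katona decomposition (equivalently, on $\alpha_6$), and either closes the case using Lemma~\ref{cord} and the lower bounds from Step~1, or tightens the bound by one; occasional appeals to $\beta_7^8 \geq 0$ are needed to exclude pathological configurations. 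This iterative bookkeeping is expected to be the most laborious part of the argument.
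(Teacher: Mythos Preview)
Your plan is correct and mirrors the paper's proof almost exactly: the paper also reduces to $\alpha_7\geq\binom{n-1}{7}+1$, proves the two ``inner'' inequalities $\alpha_3-\alpha_2\leq\binom{n}{3}-\binom{n}{2}$ and $\alpha_5-\alpha_4\leq\binom{n}{5}-\binom{n}{4}$, and then handles $\alpha_7-\alpha_6\leq\binom{n}{7}-\binom{n}{6}$ by the telescoping residual $n-7$ for $n\geq 13$ and a case analysis for $n\in\{10,11,12\}$. Two small remarks: the downward Kruskal--Katona bound from $\alpha_7\geq\binom{n-1}{7}+1$ is $\alpha_k\geq\binom{n-1}{k}+\binom{6}{k-1}$, not $\binom{6}{k}$; and in the small-$n$ cases the paper's iterative refinement runs on $\alpha_6$ (feeding the assumed upper bound into $\beta_6^8\geq 0$ to control $\alpha_5-\alpha_4+\alpha_3-\alpha_2$, together with an auxiliary bound on $3\alpha_4-7\alpha_3$) rather than on $\alpha_7$ via $\beta_7^8\geq 0$ as you suggest.
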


\begin{proof}
Since $\beta_7^7 = \alpha_7 - \beta_6^6$ and $\beta_6^6\geq 0$, we can assume that $\alpha_7\geq \binom{n-1}{7}+1$, otherwise there
is nothing to prove. It follows that:
\begin{equation}\label{condz}
\alpha_k \geq \binom{n-1}{k} + \binom{6}{k-1},\text{ for }2\leq k\leq 7.
\end{equation}
Also, as $\beta_k^8\geq 0$, for $2\leq k\leq 6$, it follows that
\begin{equation}\label{condyz}
\begin{split}
& \alpha_2 \geq 7(n-4),\;\alpha_3 \geq 6\alpha_2-21n+56 \geq 21n-112 \\
& \alpha_4 \geq 5\alpha_3 - 15\alpha_2 + 35n - 70 \geq 35n- 210 \\
& \alpha_5 \geq 4\alpha_4 - 10\alpha_3 + 20\alpha_2 - 35n + 56 \geq  35n - 224 \\
& \alpha_6 \geq 3\alpha_5 - 6\alpha_4 + 10 \alpha_3 - 15\alpha_2 + 21n - 28.
\end{split}
\end{equation}
Let $h_3(x)=\binom{x}{3}-\binom{x}{2}$ and $h_2(x)=\binom{x}{2}-\binom{x}{1}$. We claim that
\begin{equation}\label{kklem1}
\alpha_3-\alpha_2 \leq \binom{n}{3}-\binom{n}{2} = h_3(n).
\end{equation}
If $\alpha_2=\binom{n}{2}$ then there is nothing to prove. Otherwise, from \eqref{condz} it follows that
$\alpha_2=\binom{n-1}{2}+\binom{n_1}{1}$ for some $n-1>n_1\geq 6$. Since $\alpha_3\leq \binom{n-1}{3}+\binom{n_1}{2}$ and $n\geq 10$
 it follows that $$\alpha_3-\alpha_2 \leq h_3(n-1)+h_2(n_2) \leq h_3(n-1)+h_2(n-2) = h_3(n)-(n-3)\leq h_3(n),$$
and thus \eqref{kklem1} holds.

Let $g_k(x)=\binom{x}{k}-\binom{x}{k-1}$ for $2\leq k\leq 5$.
We claim that
\begin{equation}\label{kklem2}
\alpha_5-\alpha_4 \leq \binom{n}{5}-\binom{n}{4} = g_5(n).
\end{equation}
If $\alpha_4=\binom{n}{4}$ then there is nothing to prove. Otherwise, from \eqref{condz} it follows that
$$\alpha_4=\binom{n-1}{4}+\binom{n_3}{3}+\binom{n_2}{2}+\binom{n_1}{1},$$ for some $n-1\geq n_3\geq 6$ and $n_3>n_2>n_1\geq 0$.
Since $n\geq 10$, we have
\begin{align*}
& h_5(n)-(\alpha_5-\alpha_4) \geq h_5(n) - (h_5(n-1)+h_4(n_3)+h_3(n_2)+h_2(n_1)) \geq \\
& h_5(n) - (h_5(n-1)+h_4(n-2)+h_3(n-3)+h_2(n-4)) = n-5 \geq 0,
\end{align*}
and thus the claim \eqref{kklem2} is proved.

Now, let $f_k(x)=\binom{x}{k}-\binom{x}{k-1}$, for $2\leq k\leq 7$.

 \begin{center}
 \begin{table}[tbh]
 \begin{tabular}{|c|c|c|c|c|c|c|c|c|c|c|c|c|c|c|}
 \hline
 $x$      & 1  & 2  & 3  & 4  & 5  & 6  & 7   & 8    & 9  & 10 & 11 & 12 & 13 & 14   \\ \hline
 $f_7(x)$ & 0  & 0  & 0  & 0  & 0  & -1 & -6  & -20  & -48 & -90& -132& -132& 0& 429 \\ \hline
 $f_6(x)$ & 0  & 0  & 0  & 0  & -1 & -5 & -14 & -28  & -42 & -42& 0   & 132 & 429 &  \\ \hline
 $f_5(x)$ & 0  & 0  & 0  & -1 & -4 & -9 &  -14 & -14 & 0   & 42 & 132 & 297 &   &  \\ \hline
 $f_4(x)$ & 0  & 0  & -1 & -3 & -5 & -5 & 0   &  14  & 42  & 90 & 165 &     &   &  \\ \hline
 $f_3(x)$ & 0  & -1 & -2 & -2 & 0 & 5 &  14 & 28 &  48 & 75 & & & &  \\ \hline
 $f_2(x)$ & -1 & -1 & 0  & 2  & 5 & 9 & 14 &  20 &  27 & & & & & \\ \hline
 \end{tabular}
 \end{table}
\end{center}

From \eqref{kklem1} and \eqref{kklem2} it follows that, in order to prove that $\beta_7^{7} \leq \binom{n-1}{7}$ it
suffices to show that
\begin{equation}\label{kklem3}
\alpha_7-\alpha_6 \leq \binom{n}{7}-\binom{n}{6} = f_7(n).
\end{equation}
If $\alpha_6=\binom{n}{6}$ then there is nothing to prove, so we may assume that
$$\alpha_6=\binom{n-1}{6}+\binom{n_5}{5}+\binom{n_4}{4}+\binom{n_3}{3}+\binom{n_2}{2}+\binom{n_1}{1}$$
where $\;n-1>n_5\geq 7\text{ and }n_5>n_4>n_3>n_2>n_1\geq 0$.

If $n\geq 13$ then, using the table with the values for $f_k(x)$'s, it is easy to see that
\begin{align*}
& f_7(n)-(\alpha_7-\alpha_6)\geq f_7(n)-(f_7(n-1)+f_6(n_5)+f_5(n_4)+f_4(n_3)+f_3(n_2)+f_2(n_1)) \geq \\
& f_7(n)-(f_7(n-1)+f_6(n-2)+f_5(n-3)+f_4(n-4)+f_3(n-5)+f_2(n-6)) = n-7 \geq 6,
\end{align*}
and thus \eqref{kklem3} holds. It remains to consider the following cases:
\begin{itemize}
\item $n=12$. Since $n_6\geq 6$, from the table with the values for $f_k(x)$'s it follows that
$$\alpha_7-\alpha_6 \leq f_7(11) + f_6(10)+f_5(9)+f_4(8)+f_3(7)+f_2(1) = -137<-132=f_7(12),$$
and we are done.
\item $n=11$. If $\alpha_6\geq \binom{10}{6}+\binom{8}{5}+\binom{7}{4}=301$ then 
      $$\alpha_7-\alpha_6\leq f_7(10)+f_6(8)+f_5(7)=-132=f_7(11),$$
			and there is nothing to prove. Hence, we may assume that $\alpha_6\leq 300$. From \eqref{condyz}
			it follows that
			$$3\alpha_5 - 6\alpha_4 + 10\alpha_3 - 15\alpha_2 \leq 300  - 21\cdot 11 + 28 = 97,$$
			which is equivalent to
			\begin{equation}\label{ss1}
			3(\alpha_5-\alpha_4+\alpha_3-\alpha_2) \leq 97 + 3\alpha_4 - 7\alpha_3 +12\alpha_2.
			\end{equation}
			On the other hand, we claim that 
			\begin{equation}\label{ss2}
			3\alpha_4 - 7\alpha_3\leq 3\binom{11}{4}-7\binom{11}{3}=-165.
			\end{equation}
			Indeed, if $\alpha_3=\binom{11}{3}$ then there is nothing to prove. Assume that $\alpha_3=\binom{10}{3}+\binom{n_2}{2}+\binom{n_1}{1}$,
			where $10>n_2>n_1\geq 0$ and $n_2\geq 6$, it is easy to check that
			$$3\alpha_4 - 7\alpha_3\leq 3\left(\binom{10}{4}+\binom{9}{3}+\binom{8}{2}\right)-7\left(\binom{10}{3}+\binom{9}{2}+\binom{8}{1}\right)=-182.$$
			From \eqref{ss1} and \eqref{ss2} it follows that
			$$\alpha_5-\alpha_4+\alpha_3-\alpha_2\leq 197.$$
			Hence, in order to show that $\beta_7^7\leq \binom{10}{7}=120$, it is enough to prove that
			$\alpha_7-\alpha_6\leq -87$, which is true as $\alpha_7-\alpha_6<f_7(10)=-90$.
			
\item $n=10$. If $\alpha_6\geq \binom{9}{6}+\binom{8}{5}+\binom{6}{4}+\binom{4}{3}+\binom{2}{2}+\binom{1}{1}=161$
      then $$\alpha_7-\alpha_6\leq f_7(9)+f_6(8)+f_5(6)+f_4(4)+f_3(2)+f_2(1)=-90=f_7(10),$$
			and there is nothing to prove. Hence, we may assume that $\alpha_6\leq 160$. From \eqref{condyz}
			it follows that
			$$3\alpha_5 - 6\alpha_4 + 10\alpha_3 - 15\alpha_2 \leq 160  - 21\cdot 10 + 28 = -22 ,$$
			which is equivalent to
			\begin{equation}\label{sss1}
			3(\alpha_5-\alpha_4+\alpha_3-\alpha_2) \leq -22 + 3\alpha_4 - 7\alpha_3 +12\alpha_2.
			\end{equation}
			As in the case $n=11$, it is easy to check that $3\alpha_4 - 7\alpha_3\leq 3\binom{10}{4}-7\binom{10}{3}=-210$.
			From \eqref{sss1} we deduce therefore that $$\alpha_5-\alpha_4+\alpha_3-\alpha_2\leq 102.$$
			Hence, in order to show that $\beta_7^7\leq \binom{9}{7}=36$, it is enough to prove that
			$\alpha_7-\alpha_6\leq -76$, which is true for $\alpha_6\geq \binom{9}{6}+\binom{7}{5}+\binom{6}{4}+\binom{4}{3}+\binom{2}{2}+\binom{1}{1}=126$.
			Assume $\alpha_6\leq 125$.
			From \eqref{condyz} it follows that $$3\alpha_5 - 6\alpha_4 + 10\alpha_3 - 15\alpha_2 \leq -56.$$ As above, we deduce that
			$\alpha_5-\alpha_4+\alpha_3-\alpha_2\leq 91$ and thus it is enough to show that $\alpha_7-\alpha_6\leq -65$.
			
			If $\alpha_6\geq \binom{9}{6}+\binom{7}{5}+ \binom{4}{4}+\binom{3}{3}+\binom{2}{2}=108$ then $\alpha_7-\alpha_6\leq -65$ and there is nothing
			to prove, hence we may assume $\alpha_6\leq 107$. From \eqref{condyz} it follows that 
			$$3\alpha_5 - 6\alpha_4 + 10\alpha_3 - 15\alpha_2 \leq -75.$$
			As above, we deduce $\alpha_5-\alpha_4+\alpha_3-\alpha_2\leq 85$ thus it is enough to show that $\alpha_7-\alpha_6\leq -59$. Again, this is
			true for $\alpha_6\geq \binom{9}{6}+\binom{6}{5}+\binom{5}{4}+\binom{3}{3}+\binom{2}{2}=97$, so we may assume $\alpha_6\leq 96$.
			
			From \eqref{condyz} it follows that $$3\alpha_5 - 6\alpha_4 + 10\alpha_3 - 15\alpha_2 \leq -86.$$
			As above, we deduce $\alpha_5-\alpha_4+\alpha_3-\alpha_2\leq 81$ thus it is enough to show that $\alpha_7-\alpha_6\leq -55$,
			which is true for $\alpha_6\geq \binom{9}{6}+\binom{6}{5}+ \binom{4}{4}+\binom{3}{3}=92$. Assume $\alpha_6\leq 91$.
			From \eqref{condyz} it follows that $$3\alpha_5 - 6\alpha_4 + 10\alpha_3 - 15\alpha_2 \leq -91,$$ and we deduce
			$\alpha_5-\alpha_4+\alpha_3-\alpha_2\leq 79$ thus it is enough to show that $\alpha_7-\alpha_6\leq -53$, which is true
			as $\alpha_6\geq \binom{9}{6}+\binom{6}{5}$.			
\end{itemize}
Hence, the proof is complete.
\end{proof}

Now we are able to prove our main result:

\begin{teor}\label{main}
Let $I\subset S$ be a squarefree monomial ideal with $\hdepth(S/I)\leq 8$. Then
$$\hdepth(I)\geq \hdepth(S/I)-1.$$
\end{teor}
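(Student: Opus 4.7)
The plan is to reduce the theorem to a finite list of explicit inequalities on the $\beta$-numbers via Lemma \ref{lem2}, and then invoke the technical lemmas of this section (for $q=8$) together with the analogous results of \cite{bordi} and \cite{bordi2} (for $q=7$).

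First, I would perform the standard reductions. By Theorem \ref{teo1}, if $I$ is principal then $\hdepth(I)=n\geq q+1$, and the claim is immediate, so I may assume $I$ is not principal. By \cite[Remark 2.4]{bordi} I may further assume $I\subset \me^2$, which gives $n\geq q+2$. Next, if $q\leq 6$ the main results of \cite{bordi2} already yield $\hdepth(I)\geq q>q-1$, so nothing needs to be proven in that range. The only substantial cases are therefore $q=7$ and $q=8$.

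For each of these two cases, I would apply Lemma \ref{lem2} with $\ell=1$, translating the desired inequality $\hdepth(I)\geq q-1$ into the collection of bounds
$$\beta_j^{q-1}(S/I)\leq \binom{n-q+j}{j},\qquad 3\leq j\leq q-1.$$
When $q=8$, these five inequalities ($j=3,4,5,6,7$) are precisely the statements of Lemmas \ref{q3_10}, \ref{q4_8}, \ref{q5_8}, \ref{q6_8} and \ref{q7_8}, so this case becomes pure assembly. When $q=7$, the four inequalities $\beta_j^6\leq\binom{n-7+j}{j}$ for $3\leq j\leq 6$ coincide with the bounds that appeared in the proof of $\hdepth(I)\geq \hdepth(S/I)$ for $\hdepth(S/I)=6$ in \cite{bordi2}. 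As noted in the proof of Lemma \ref{b37}, the arguments in \cite{bordi} and \cite{bordi2} only require the weaker hypothesis $\hdepth(S/I)\geq q'$ rather than equality, so they apply here as well since $\hdepth(S/I)=7\geq 6$. Collecting all these bounds, Lemma \ref{lem2} delivers the conclusion.

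The main obstacle is not in this final assembly, but in the heavy technical lemmas leading up to it. In particular, the Kruskal--Katona-based case analyses of Lemmas \ref{q5_8} and \ref{q6_8} must be carried out separately for each residual value of $n$, each time peeling off a range of admissible values of $\alpha_2,\alpha_3,\alpha_4,\alpha_5$ by combining the Kruskal--Katona upper bound on $\alpha_{k+1}$ with the constraints $\beta_k^8\geq 0$. This iterative shrinking of the admissible region, rather than any single conceptual step, is where the real difficulty of the theorem lies.
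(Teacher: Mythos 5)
Your proposal is correct and follows essentially the same route as the paper's one-line proof, which cites Lemma \ref{lem2} together with the technical $\beta$-bounds of Lemmas \ref{q3_10}--\ref{q7_8}. If anything, your write-up is more explicit than the paper's: you carefully dispose of $q\leq 6$ via the earlier results and of $q=7$ by the observation (made in the paper only for $\beta_3$, in Lemma \ref{b37}) that the $q'=6$ bounds from \cite{bordi2} require only $\hdepth(S/I)\geq 6$, whereas the paper's theorem proof simply lists the lemmas, which on their face cover only $q=8$.
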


\begin{proof}
It follows from Lemma \ref{lem2}, Lemma \ref{q3_10}, Lemma \ref{q4_8}, Lemma \ref{q5_8}, Lemma \ref{q6_8} and Lemma \ref{q7_8}.
\end{proof}

\begin{cor}
Let $I\subset S=K[x_1,x_2,\ldots,x_{10}]$ be a squarefree monomial ideal. Then
$$\hdepth(I)\geq \hdepth(S/I)-1.$$
\end{cor}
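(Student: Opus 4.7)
The plan is to reduce the corollary directly to Theorem \ref{main} by a trivial case analysis on $q := \hdepth(S/I)$. For $n=10$, a proper squarefree monomial ideal forces $q \leq n-1 = 9$, since $\hdepth(S/I) = n$ would require the Hilbert series of $S/I$ to match that of a free module of depth $n$, which only happens when $I=0$ (a case that we treat separately as vacuous).

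First I would dispose of the degenerate cases: if $I = 0$ or $I = S$ the inequality is either vacuous or trivial. Otherwise $1 \leq q \leq 9$. If $q \leq 8$, Theorem \ref{main} applies verbatim and yields $\hdepth(I) \geq q - 1$, as desired.

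The only remaining case is $q = 9 = n - 1$. By Theorem \ref{teo1}, the condition $\hdepth(S/I) = n - 1$ is equivalent to $I$ being principal, which in turn is equivalent to $\hdepth(I) = n = 10$. Therefore
\[
\hdepth(I) = 10 \geq 8 = q - 1,
\]
completing the argument.

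There is no genuine obstacle here: the corollary is essentially a packaging result. The only mildly subtle point is making sure that the boundary value $q = n - 1 = 9$ is handled, which is exactly what the equivalences of Theorem \ref{teo1} are designed for. All the real work has been done in Lemmas \ref{lem2}, \ref{q3_10}, \ref{q4_8}, \ref{q5_8}, \ref{q6_8}, \ref{q7_8} and assembled in Theorem \ref{main}.
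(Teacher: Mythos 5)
Your proof is correct and follows the same route as the paper: split on $q=\hdepth(S/I)$, invoke Theorem \ref{main} for $q\leq 8$, and handle $q=9=n-1$ via the principality equivalence of Theorem \ref{teo1}. You spell out the degenerate cases $I=0$, $I=S$ and the boundary $q=n$ a little more explicitly than the paper does, but the substance is identical.
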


\begin{proof}
Let $q=\hdepth(S/I)$. If $q=9$ then $I$ is principal and there is nothing to prove.
For $q\leq 8$ the conclusion follows from Theorem \ref{main}.
\end{proof}

\newpage
\section{Computer experiments and a conjecture}

Let $m\geq 1$ and $n\geq m+1$ be two integers. We consider the ideal
$$I_{n,m}:=(x_1x_2\cdots x_m)\cap (x_{m+1},x_{m+2},\ldots,x_n)\subset S=K[x_1,\ldots,x_n].$$
Using \cite[Example 3.4]{uli}, we can easily see that
$$\hdepth(I_{n,m})=\hdepth((x_{m+1},x_{m+2},\ldots,x_n)S)=m+\left\lfloor \frac{n-m+1}{2} \right\rfloor=\left\lfloor \frac{n+m+1}{2} \right\rfloor.$$
On the other hand, the computation of $\hdepth(S/I_{n,m})$ seems extremely difficult.

\begin{lema}\label{liema}
With the above notations
$$\alpha_j=\alpha_j(S/I_{n,m})=\begin{cases} \binom{n}{j},& 0\leq j\leq m \\ \binom{n}{j}-\binom{n-m}{j-m},& m+1\leq j\leq n \end{cases}.$$
\end{lema}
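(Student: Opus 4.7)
The plan is to count squarefree monomials $u$ of degree $j$ that do \emph{not} belong to $I_{n,m}$, and subtract that count from $\binom{n}{j}$. The key observation is the characterization of membership: a squarefree monomial $u$ lies in $I_{n,m} = (x_1\cdots x_m) \cap (x_{m+1},\ldots,x_n)$ if and only if the support of $u$ contains $\{x_1,\ldots,x_m\}$ \emph{and} meets $\{x_{m+1},\ldots,x_n\}$. So the first step is to write this characterization explicitly.

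Next I would split on the degree. For $j \leq m-1$, no squarefree monomial of degree $j$ can contain all of $x_1,\ldots,x_m$ in its support, so none lies in $I_{n,m}$, giving $\alpha_j = \binom{n}{j}$. For $j = m$, the only squarefree monomial of degree $m$ whose support contains $\{x_1,\ldots,x_m\}$ is $x_1\cdots x_m$ itself, and its support fails to meet $\{x_{m+1},\ldots,x_n\}$, so again no such monomial is in $I_{n,m}$ and $\alpha_m = \binom{n}{m}$. This handles the first branch.

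For $m+1 \leq j \leq n$, a squarefree monomial of degree $j$ lying in $I_{n,m}$ is determined by its support, which must contain $\{x_1,\ldots,x_m\}$ and consist of $j-m$ additional variables chosen from $\{x_{m+1},\ldots,x_n\}$; the condition of meeting $\{x_{m+1},\ldots,x_n\}$ is then automatic since $j-m \geq 1$. Hence there are exactly $\binom{n-m}{j-m}$ squarefree monomials of degree $j$ in $I_{n,m}$, and therefore
\[
\alpha_j(S/I_{n,m}) = \binom{n}{j} - \binom{n-m}{j-m},
\]
which matches the second branch of the formula.

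There is no real obstacle here; the argument is a direct counting once the membership condition for $I_{n,m}$ is unpacked. The only mild care needed is the $j=m$ case, to note that although $x_1\cdots x_m$ generates the first factor of the intersection, it is not itself a member of $(x_{m+1},\ldots,x_n)$ and hence not of $I_{n,m}$, so it still contributes to $\alpha_m$.
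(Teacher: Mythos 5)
Your proof is correct and follows essentially the same approach as the paper, which also counts the squarefree monomials of degree $j$ in $I_{n,m}$ and concludes there are $0$ for $j\leq m$ and $\binom{n-m}{j-m}$ for $j\geq m+1$. You simply spell out the membership characterization and the borderline case $j=m$ more explicitly than the paper does.
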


\begin{proof}
It is clear that the number of squarefree monomials of degree $j$ in $I_{n,m}$ is equal to $0$ for $j\leq m$ and $\binom{n-m}{j-m}$ for $j\geq m+1$.
The conclusion follows.
\end{proof}

\begin{prop}\label{propoo}
With the above notations
$$\beta_k^q=\beta_k^q(S/I_{n,m})=\binom{n-q+k-1}{k}-\binom{n-q+k-1-m}{k-m}+(-1)^{k-m}\binom{q-m}{k-m},$$
for all $0\leq k\leq q\leq n$.
\end{prop}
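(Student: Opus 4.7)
The plan is to start from the definition
$$\beta_k^q(S/I_{n,m}) = \sum_{j=0}^k (-1)^{k-j}\binom{q-j}{k-j}\alpha_j(S/I_{n,m})$$
and substitute the piecewise formula for $\alpha_j$ from Lemma \ref{liema}. The key bookkeeping observation is that this piecewise formula can be rewritten uniformly, using the standard convention $\binom{n-m}{j-m}=0$ when $j<m$, as
$$\alpha_j(S/I_{n,m}) = \binom{n}{j} - \binom{n-m}{j-m} + [j=m],$$
where $[\,\cdot\,]$ is the Iverson bracket. Indeed, for $j<m$ both correction terms are $0$; for $j=m$ the subtraction $\binom{n-m}{0}=1$ is exactly cancelled by the Iverson term; for $j>m$ the Iverson term vanishes and we recover the second branch of Lemma \ref{liema}.

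Plugging this into the defining sum splits $\beta_k^q$ into three pieces: a sum against $\binom{n}{j}$, a sum against $\binom{n-m}{j-m}$ (supported on $m\leq j\leq k$), and the single contribution from the $j=m$ term, which is $(-1)^{k-m}\binom{q-m}{k-m}$ whenever $m\leq k$. The first sum equals $\binom{n-q+k-1}{k}$ by a direct application of the Chu--Vandermonde identity \eqref{combi}. For the second sum I would substitute $i=j-m$ to rewrite it as
$$\sum_{i=0}^{k-m}(-1)^{(k-m)-i}\binom{(q-m)-i}{(k-m)-i}\binom{n-m}{i},$$
and then apply \eqref{combi} once more, with $(n,q,k)$ replaced by $(n-m,q-m,k-m)$, obtaining $\binom{n-q+k-m-1}{k-m}$. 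Adding up the three contributions produces exactly the claimed formula.

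The only subtle point, rather than a genuine obstacle, is the treatment of the boundary case $j=m$: one must notice that Lemma \ref{liema} assigns $\alpha_m=\binom{n}{m}$ rather than $\binom{n}{m}-\binom{n-m}{0}$, and this single discrepancy is precisely what produces the extra term $(-1)^{k-m}\binom{q-m}{k-m}$. For $k<m$, both the second binomial and the correction term vanish under the usual convention $\binom{a}{b}=0$ for $b<0$, so the stated formula is valid across the full range $0\leq k\leq q\leq n$ without case distinction.
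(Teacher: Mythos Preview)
Your proof is correct and follows essentially the same approach as the paper: the paper rewrites $\alpha_j$ as $\binom{n}{j}-\binom{n-m}{j-m}+\delta_j(m)$ (your Iverson bracket), splits the defining sum accordingly, and invokes \eqref{combi}. Your version is in fact more explicit, spelling out the index shift $i=j-m$ and the second application of \eqref{combi} that the paper leaves implicit.
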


\begin{proof}
From Lemma \ref{liema} it follows that
$$\beta_k^q=\sum_{j=0}^k (-1)^{k-j} \binom{q-j}{k-j} \alpha_j = \sum_{j=0}^k (-1)^{k-j} \binom{q-j}{k-j} \left(\binom{n}{j}-\binom{n-m}{j-m}+\delta_{j}(m)\right),$$
where $\delta_{j}(m)=\begin{cases} 1,& j=m \\ 0,& j\neq m \end{cases}$. Thus, the conclusion follows by applying \eqref{combi}.
\end{proof}

\begin{cor}\label{coroo}
With the above notations
\begin{align*}
\hdepth(S/I_{n,m})=\max\{q\;:& \;\binom{n-q+k-1}{k}-\binom{n-q+k-1-m}{k-m} \geq \binom{q-m}{k-m},\\
&\text{ for all }m\leq k\leq q\leq n\text{ with }k-m\text{ odd}\}.
\end{align*}
\end{cor}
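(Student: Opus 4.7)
The strategy is to combine \eqref{hdep} with the closed form for $\beta_k^q(S/I_{n,m})$ given in Proposition \ref{propoo}, and then identify which of the inequalities $\beta_k^q \geq 0$ are automatic and which are genuine constraints. By \eqref{hdep}, the task is to characterize the largest $q$ for which $\beta_k^q(S/I_{n,m}) \geq 0$ holds for every $0 \leq k \leq q \leq n$.

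First I would dispose of the case $k < m$: both $\binom{n-q+k-1-m}{k-m}$ and $\binom{q-m}{k-m}$ vanish (negative lower index), so $\beta_k^q = \binom{n-q+k-1}{k} \geq 0$ automatically. Next, for $k \geq m$, write $\ell = k-m \geq 0$ and use the symmetry $\binom{n-q+k-1}{k} = \binom{n-q+k-1}{n-q-1}$ and $\binom{n-q+k-1-m}{k-m} = \binom{n-q+k-1-m}{n-q-1}$. The two binomials share the complementary lower index $n-q-1$, and the upper index of the first exceeds that of the second by $m$, so $\binom{n-q+k-1}{k} \geq \binom{n-q+k-1-m}{k-m}$ whenever $q \leq n-1$. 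When $\ell$ is even (including $\ell=0$), the sign $(-1)^{\ell}$ is $+1$ and the extra term $\binom{q-m}{k-m}$ only enlarges $\beta_k^q$, so $\beta_k^q \geq 0$ is automatic in that case as well.

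The only genuine inequalities are therefore those indexed by $m \leq k \leq q$ with $k-m$ odd, where the sign flips and $\beta_k^q \geq 0$ becomes
$$\binom{n-q+k-1}{k} - \binom{n-q+k-1-m}{k-m} \geq \binom{q-m}{k-m},$$
which is precisely the condition appearing in the statement. Substituting back into \eqref{hdep} yields the claimed formula.

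I do not foresee any serious obstacle. The one point that deserves a line of comment is the edge case $q=n$: since the comparison $\binom{n-q+k-1}{k} \geq \binom{n-q+k-1-m}{k-m}$ used in the even-parity step requires $q \leq n-1$, one must observe separately that $q=n$ is never a candidate when $n \geq m+1$; indeed, evaluating Proposition \ref{propoo} at $k = m+1$, $q = n$ gives $\beta_{m+1}^n = -(n-m) < 0$, so the maximum in \eqref{hdep} is necessarily attained at some $q \leq n-1$, where the reduction above is valid.
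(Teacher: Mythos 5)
Your proposal is correct and takes essentially the same route as the paper: the paper's proof consists of the single sentence ``It follows from Proposition \ref{propoo} and \eqref{hdep},'' and your argument supplies exactly the reasoning this implies (the cases $k<m$ and $k-m$ even are automatic, the odd case gives the stated inequality, and $q=n$ is ruled out). The added care with the $q=n$ edge case and the symmetry argument for even $k-m$ are welcome details that the paper left to the reader.
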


\begin{proof}
It follows from Proposition \ref{propoo} and \eqref{hdep}. 
\end{proof}

Using Corollary \ref{coroo}, we are able to calculate $\hdepth(S/I_{n,m})$ with the help of a computer. We denote
$$d(n,m)=\hdepth(S/I_{n,m})-\hdepth(I_{n,m})\text{ and }q(n,m)=\hdepth(S/I_{n,m}).$$
We have the following list of minimal, with respect to $n$ and $q(n,m)$, examples:

\begin{center}
\begin{table}[tbh]
\begin{tabular}{|c|c|c|}
\hline
$(n,m)$ & $d(n,m)$ & $q(n,m)$ \\ \hline
  (6,2) & 0        &  4       \\ \hline
  (10,2)& 1        &  7       \\ \hline
  (15,3)& 2        &  11      \\ \hline
  (20,4)& 3        &  15      \\ \hline
(25,5)  & 4        &  19      \\ \hline
(30,6)  & 5        &  23      \\ \hline
(35,5)  & 6        &  26      \\ \hline
(40,8)  & 7        &  31      \\ \hline
(45,9)  & 8        &  35      \\ \hline
(51,7)  & 9        &  38      \\ \hline
(55,11) & 10       &  43      \\ \hline
(106,20) & 20      &  83      \\ \hline
(139,17) & 30      &  108     \\ \hline
(161,19) & 40      &  130     \\ \hline
(183,21) & 50      &  152     \\ \hline
(350,7)  & 100     &  279     \\ \hline
\end{tabular}
\end{table}
\end{center}

For any integer $d\geq 0$, let $n(d)$ and $m(d)$ the smallest integers with the property that 
$$\hdepth(S/I_{n(d),m(d)})-\hdepth(I_{n(d),m(d)})=d.$$
Also, let $q(d)=\hdepth(S/I_{n(d),m(d)})$. According to the above table, we have 
for instance $n(0)=6$, $m(0)=2$, $q(0)=4$, $n(1)=10$, $m(1)=2$ and $q(1)=7$ etc.

We believe that the ideals $I_{n,m}$ give the smallest examples with respect to $n$ and to $q=\hdepth(S/I)$
for a given difference $d=\hdepth(S/I)-\hdepth(I)$, where $I\subset S$ is a squarefree monomial ideal.
More precisely, we propose the following conjecture:

\begin{conj}\label{conju}
Let $d\geq 0$ be an integer and let $I\subset S=K[x_1,\ldots,x_n]$ be a squarefree monomial ideal with $\qdepth(S/I)=q$.
If $q<q(d)$ or $n<n(d)$ then $$\hdepth(S/I)\geq \hdepth(I)-d+1.$$
\end{conj}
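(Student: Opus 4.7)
The plan is to reformulate the conjectural inequality $\hdepth(S/I) \geq \hdepth(I) - d + 1$ via the combinatorial characterization \eqref{hdep} and the duality \eqref{betai}, and then attack the resulting family of $\beta$-inequalities by generalizing the case-by-case strategy of Lemmas \ref{q3_10}--\ref{q7_8}. Writing $h := \hdepth(I)$, the target inequality is equivalent, via \eqref{betai} applied at level $q_0 = h - d + 1$, to
$$\beta_k^{h-d+1}(I) \leq \binom{n-h+d+k-2}{k}\quad\text{for all }0\leq k\leq h-d+1.$$
The hypothesis $\hdepth(I) = h$ already supplies $\beta_k^h(I) \geq 0$ for every $k \leq h$, and these values at different levels are linked by the recursion $\beta_k^{q}(M) = \beta_k^{q+1}(M) + \beta_{k-1}^q(M)$, which lets one express each $\beta_k^{h-d+1}(I)$ as a positive combination of the $\beta_i^h(I)$'s. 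The remaining task is to combine these identities with the Kruskal--Katona bounds of Lemma \ref{cord} applied to the $\alpha_j(I)$'s, and to show that the resulting estimates forbid the failure of the target inequality unless $n \geq n(d)$ and $q \geq q(d)$.

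The natural route is induction on $d$. The cases $d = 0$ and $d = 1$ should reduce to finite checks in the range $n < n(d)$ or $q < q(d)$, supplemented by Theorem \ref{teo1} for the principal-ideal boundary and by the inequalities from \cite{bordi, bordi2} (with the roles of $I$ and $S/I$ appropriately interchanged). For the inductive step one would produce analogues of Lemmas \ref{q4_8}--\ref{q7_8} in which the index of the controlled $\beta$ is shifted by $d$: assume the conclusion fails, write the relevant $\alpha_j(I)$'s in Kruskal--Katona normal form, substitute the resulting estimates into the expression for $\beta_k^{h-d+1}(I)$, and squeeze out a contradiction forcing $n \geq n(d)$ or $q \geq q(d)$. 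The extremal family $\{I_{n,m}\}$ serves as the calibration: Proposition \ref{propoo} supplies its exact $\beta$-values, so sharpness of the candidate bounds can be checked directly against Corollary \ref{coroo}.

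The principal obstacle is the combinatorial explosion of the case analysis. Lemmas \ref{q4_8}--\ref{q7_8} already contain dozens of nested subcases each, obtained by successively tightening Kruskal--Katona estimates on $\alpha_2, \alpha_3, \ldots$, and this complexity grows rapidly in both $d$ and $q$. Any uniform proof almost certainly demands a more structural framework, for instance a linear-programming encoding of the feasible $\alpha$-sequences subject to Kruskal--Katona and to $\beta_k^h(I) \geq 0$, with $\beta_k^{h-d+1}(I)$ as the objective, or an inductive device reducing the problem at $(d, n, q)$ to the analogous problem at $(d, n-1, q-1)$ or $(d-1, n-1, q-1)$. A secondary difficulty is that the thresholds $n(d)$ and $q(d)$ are defined only implicitly through Corollary \ref{coroo}, so proving the conjecture with exactly these cutoffs (rather than weaker ones) requires a separate argument that the family $\{I_{n(d), m(d)}\}$ is genuinely extremal among all squarefree monomial ideals --- a substantial combinatorial statement in its own right, likely requiring an extremal Kruskal--Katona-type analysis of the $\beta_k^q$-sequences.
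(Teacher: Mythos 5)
This statement is labelled a \emph{Conjecture} in the paper, and the paper offers no proof of it: the authors support it only with computer experiments on the family $I_{n,m}$ (via Lemma \ref{liema}, Proposition \ref{propoo} and Corollary \ref{coroo}) and note that the known cases $d=1$ (from \cite{bordi,bordi2}) and the partial case $d=2$ (Theorem \ref{main}) are consistent with it. Your submission is likewise not a proof but a research programme, and you concede as much. The concrete gaps are these. First, the inductive step on $d$ is never executed: you describe what an analogue of Lemmas \ref{q4_8}--\ref{q7_8} ``would'' look like, but producing such lemmas is exactly the content of the conjecture, and the paper's own evidence suggests the method does not scale --- even for $d=2$ the conjecture demands the range $q\leq 10$ or $n\leq 14$, whereas the authors' technique only reaches $q\leq 8$ or $n\leq 10$. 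Second, the thresholds $n(d)$ and $q(d)$ are defined as the minimal parameters at which the specific family $I_{n,m}$ achieves defect $d$; proving the conjecture with exactly these cutoffs requires showing that $I_{n,m}$ is extremal among \emph{all} squarefree monomial ideals, which you correctly flag as a ``substantial combinatorial statement'' but do not address. A plan that names its own missing ingredients is not a proof of the statement.

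One further caution: you take the displayed inequality $\hdepth(S/I)\geq\hdepth(I)-d+1$ literally and reduce it to $\beta_k^{h-d+1}(I)\leq\binom{n-h+d+k-2}{k}$ with $h=\hdepth(I)$. The discussion immediately following the conjecture in the paper (the $d=1$ instance is read off as ``$\qdepth(I)\geq\qdepth(S/I)$'') makes clear that the intended inequality is $\hdepth(I)\geq\hdepth(S/I)-d+1$, i.e.\ the roles of $I$ and $S/I$ are swapped in the display. Your reformulation is formally consistent with the literal (mistyped) statement but not with the statement the authors intend, so even the first step of your reduction targets the wrong family of $\beta$-inequalities; the correct target, as in Lemma \ref{lem2}, is $\beta_{k}^{q-d+1}(S/I)\leq\binom{n-q+k+d-2}{k}$ with $q=\hdepth(S/I)$.
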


\begin{obs}\rm
For instance, since $n(1)=10$ and $q(1)=7$, Conjecture \ref{conju} says that if 
$\qdepth(S/I)\leq 6$ or $n\leq 9$ then $\qdepth(I)\geq \qdepth(S/I)$, a result which was proved in 
\cite{bordi} and \cite{bordi2}. Also, since $n(2)=15$ and $q(2)=11$, Conjecture \ref{conju} says that if 
$\qdepth(S/I)\leq 10$ or $n\leq 14$ then $\qdepth(I)\geq \qdepth(S/I)-1$. However, in our paper we were able to
prove only for $\hdepth(S/I)\leq 8$, that $\qdepth(I)\geq \qdepth(S/I)-1$.
\end{obs}

\subsection*{Data availability}

Data sharing not applicable to this article as no data sets were generated or analyzed
during the current study.

\subsection*{Conflict of interest}

The authors have no relevant financial or non-financial interests to disclose.


\end{document}